\def\C{\subset}
\def\CL#1{\overline {#1}}
\def\e{\emptyset}
\def\mi{\backslash}
\def\o{\omega}
\def\cont{\mathfrak{c}}
\def\pritem#1{\noindent\hangafter=1
\hangindent=.27in\rlap{#1}\hskip.27in}
\def\LSS{Lindel\"of $\Sigma$}
\def\calP{{\mathcal P}}
\def\calU{{\mathcal U}}
\def\calV{{\mathcal V}}
\def\calF{{\mathcal F}}
\def\calC{{\mathcal C}}
\def\calN{{\mathcal N}}
\def\amsN{ \mathbb N}
\def\amsD{ \mathbb D}
\begin{document}

\swapnumbers
\newtheorem{defi}{Definition}[section]
\newtheorem{thm}[defi]{Theorem}
\newtheorem{exa}[defi]{Example}
\newtheorem{pro}[defi]{Proposition}
\newtheorem{lem}[defi]{Lemma}
\newtheorem{que}[defi]{Question}
\newtheorem{cor}[defi]{Corollary}

\title{Weakly   linearly Lindel\"of  \\
monotonically   normal   spaces  \\ are
Lindel\"of}

\author{I. Juh\'asz${}^1$, V.V. Tkachuk${}^{2}$,
R.G. Wilson${}^{2}$ }

\maketitle
\footnotetext[1]{Research  supported  by  NKFIH (OTKA) grant
no. 113047 (Hungary)}
\footnotetext[2]{Research  supported  by  CONACyT  grant
CB-2012-01-178103 (Mexico)}

\begin{abstract}
We call a space $X$ {\em weakly  linearly  Lindel\"of  }  if  for  any
family  $\calU$  of  non-empty  open  subsets  of  $X$ of regular
uncountable cardinality $\kappa$, there  exists  a point $x\in X$
such that every neighborhood of $x$ meets $\kappa$-many  elements
of  $\calU$.   We also introduce the concept of {\em almost discretely
Lindel\"of} spaces
as the ones in which every discrete subspace can be covered by
a Lindel\"of subspace. We prove that, in addition to
linearly Lindel\"of spaces, both
weakly  Lindel\"of  spaces and  almost discretely Lindel\"of spaces
are weakly linearly Lindel\"of.

The main result of the paper
is formulated in the title.  It implies, among other things, that
every weakly Lindel\"of monotonically normal space is Lindel\"of;
this result seems to be new even for linearly ordered topological  spaces.

We   show   that,   under   the   hypothesis  $2^\o<  \o_\o$,  if
the co-diagonal
$\Delta^c_X=(X\times X)\mi  \Delta_X$ of a space $X$ is  discretely Lindel\"of,
then  $X$  is  Lindel\"of  and  has  a  weaker  second  countable
topology; here $\Delta_X=\{(x,x):  x\in X\}$ is the  diagonal  of
the  space $X$.  Moreover, the discrete Lindel\"ofness of $\Delta^c_X$
together with the Lindel\"of  $\Sigma$-property of $X$ imply that
$X$ has a countable network. \end{abstract}

\smallskip\noindent
{\small\bf 2010 Mathematics Subject Classification}

\smallskip\noindent
{\small\bf Primary:}  54D20,

\smallskip\noindent
{\small \bf Secondary:} 54A25, 54F05

\smallskip\noindent
{\small   \bf   Keywords:}   weakly  Lindel\"of  space,  linearly
Lindel\"of space,  weakly  linearly  Lindel\"of space, discretely
Lindel\"of   space,   almost    dicretely    Lindel\"of    space,
monotonically  normal  space,  linearly  ordered space, GO space,
complement of diagonal, small diagonal, Lindel\"of $\Sigma$-space

\section{Introduction}

The closures of discrete  sets  determine quite a few topological
properties of a space $X$.  For example, if $\CL  D$  is  compact
for any discrete $D\C X$, then $X$ is compact \cite{Tk1}.  If $\CL
D$ is linearly (hereditarily) Lindel\"of for each discrete subset
$D\C X$, then $X$  is  linearly (hereditarily) Lindel\"of as well
\cite{ATW}.  In case when $X$ is compact, countable character  of
the  closures  of  all  discrete  subsets  of  $X$  implies  that
$\chi(X)\leq\o$; this was  proved in \cite{ATW}.

If $\calP$ is a topological property, it is said that a space $X$
is {\it  discretely  $\calP$}  if  $\CL  D$  has  $\calP$ for any
discrete set $D\C X$. Thus, every  discretely  compact  space  is
compact.   However,  it  is  an  open  problem  of  Arhangel'skii
\cite{Ar2}   whether   every   discretely   Lindel\"of  space  is
Lindel\"of. It is easy  to  see  that a linearly Lindel\"of space
$X$ must be Lindel\"of if $l(X)<\o_\o$.  Besides, it is a  result
of  Arhangel'skii  and  Buzyakova  \cite{AB}  that any discretely
Lindel\"of space of countable tightness is Lindel\"of.

Since there  is  still  a  possibility  that  not  all discretely
Lindel\"of spaces are Lindel\"of, a natural line of  research  is
to  find  out  in  which  classes discrete Lindel\"ofness implies
Lindel\"ofness and  to  try  to  prove  for discretely Lindel\"of
spaces the classical results known for Lindel\"of ones.  In  this
spirit,  it  was  proved  in  \cite{TW}  that  every   discretely
Lindel\"of monotonically normal space is Lindel\"of.  If $X$ is a
Tychonoff space and $\Delta_X=\{(x,x): x\in X\}$ is its diagonal,
then the Lindel\"of property of the set $\Delta^c_X= (X\times  X)
\mi  \Delta_X$  implies that $X$ is Lindel\"of and $iw(X)\leq\o$,
i.e.,  $X$  has   a   weaker   second   countable  topology  (see
\cite[Theorem 2.1.8]{Ar1}).  Clearly, it would be interesting  to
prove   the  same  for  spaces  $X$  such  that  $\Delta^c_X$  is
discretely  Lindel\"of.   The   respective  open  questions  were
formulated in  \cite{AJTW}  and  \cite{BT}.   It  is  also  worth
mentioning  that  it is an open question (attributed in \cite{Pe}
to Arhangel'skii and Buzyakova)  whether linear Lindel\"ofness of
$\Delta^c_X$ for a compact $X$ implies that $X$ is metrizable.

Burke and Tkachuk established in \cite{BT} that for any countably
compact  space  $X$,   discrete  Lindel\"ofness  of  $\Delta^c_X$
implies that $X$ is compact  and  metrizable.  It  was  asked  in
\cite{AJTW} whether  the  same  is  true  if  the  space  $X$  is
pseudocompact         and $\Delta^c_X$    is    discretely
$\sigma$-compact.   In   this   paper   we   show  that  discrete
$\sigma$-compactness   of   $\Delta^c_X$   implies    $hl(X\times
X)\leq\o$;  it  is easy to deduce from this fact that the answers
to Questions 5.8 and 5.9 of  the paper \cite{AJTW} are positive.
We show that, under $2^\cont<\o_\o$, any Tychonoff space $X$ such
that $\Delta^c_X$ is discretely Lindel\"of must be Lindel\"of and
has countable  $i$-weight.   Besides  it  is  true  in  ZFC  that
discrete  Lindel\"ofness  of  $\Delta^c_X$ implies that $X$ has a
small  diagonal.   In   particular,   if   $X$  is  a  Lindel\"of
$\Sigma$-space and $\Delta^c_X$ is  discretely  Lindel\"of,  then
the  space $X$ is cosmic; this answers Problem 4.6 from the paper
\cite{BT}.

We  also  introduce  the  classes of almost discretely Lindel\"of
spaces and weakly linearly  Lindel\"of  spaces. It turns out that
these classes have  nice  properties;  besides,  any
weakly  linearly  Lindel\"of  and monotonically normal space is
Lindel\"of.  This result, which we  consider to be interesting in
itself, seems to be  new  even  for  weakly  Lindel\"of  linearly
ordered topological spaces.

\section{Notation and terminology}

All  spaces  are  assumed  to  be  $T_1$.  Given a space $X$, the
family       $\tau(X)$       is       its       topology      and
$\tau^*(X)=\tau(X)\mi\{\e\}$; besides,  $\tau(x,X)=\{U\in\tau(X):
x\in    U\}$    for    any    $x\in   X$;   if   $A\C   X$   then
$\tau(A,X)=\{U\in\tau(X):  A\C U\}$.  All ordinals are identified
with the set of their  predecessors  and are assumed to carry the
order topology.  We denote by  $\cont$ the cardinal $2^\o$, by
$\amsD$  the  set  $\{0,1\}$  with  the  discrete  topology   and
$\amsN=\o\mi\{0\}$.  If  $X$  is a space then $\Delta_X=\{(x,x):
x\in X\}\C X\times X$ is the diagonal of $X$. A space $X$ is said
to have a {\it small  diagonal}  if  for any uncountable set $A\C
\Delta^c_X=  (X\times  X)  \mi   \Delta_X$,   there   exists   an
uncountable $B\C A$ such that $\CL B\cap \Delta_X=\e$.

The  cardinal $l(X)=\min\{\kappa: $ every open cover of $X$ has a
subcover  of  cardinality  at most $ \kappa\}$ is called the {\it
Lindel\"of number of $X$} and $hl(X)=\sup\{l(Y):Y\C X\}$  is  the
hereditary  Lindel\"of number of $X$. A space $X$ is called { \it
Lindel\"of if $l(X)\leq\o$}. If for every open cover $\calU$ of a
space $X$ there  exists  a  countable  $\calU'\C \calU$ such that
$\bigcup\calU'$ is dense in $X$, then the  space  $X$  is  called
{\it  weakly  Lindel\"of.} A space $X$ is called {\it generalized
ordered space or simply  GO  space}  if  $X$ is homeomorphic to a
subspace of a linearly ordered space.

We  say that a family $\calF$ of subsets of a space $X$ is {\it a
network modulo a cover} $\calC$  if for any $C\in\calC$ and $U\in
\tau(C,X)$ there exists $F\in \calF$ such that $C\C F \C  U$.   A
Tychonoff space $X$ is {\it Lindel\"of $\Sigma$} (or has {\it the
Lindel\"of $\Sigma$-property}) if there exists a countable family
$\calF$ of subsets of $X$ such that $\calF$ is a network modulo a
compact  cover  $\calC$  of the space $X$.  A space $X$ is called
{\it monotonically normal} if  it  admits an operator $O$ (called
the {\it monotone normality operator}) that assigns to any  point
$x\in  X$  and  any  $U\in \tau(x,X)$ a set $O(x,U)\in \tau(x,X)$
such that $O(x,U)\C U$  and  for  any  points $x,y\in X$ and sets
$U,V\in \tau(X)$ such that $x\in U$ and $y\in V$, it follows from
$O(x,U) \cap O(y,V)\neq\e$ that $x\in V$ or $y\in U$.

As  usual, we denote by $d(X)$ the minimal cardinality of a dense
subset of  $X$  and  $hd(X)=\sup\{d(Y):   Y\C  X\}$.  The minimal
cardinality of a local base at a point $x\in  X$  is  called  the
{\it  character  of $X$ at $x$}; it is denoted by $\chi(x,X)$ and
$\chi(X)=\sup\{\chi(x,X):x\in X\}$. If $X$  is  a space and $x\in
X$  then  let  $\psi(x,X)=\min\{|\calU|:  \calU\C  \tau(X)$   and
$\bigcap\calU=\{x\}\}$  and $\psi(X)=\sup\{\psi(x,X):  x\in X\}$;
the cardinal $\psi(X)$ is called the {\it pseudocharacter} of the
space  $X$.   Given  an  infinite  cardinal  $\kappa$ we say that
$t(X)\leq\kappa$ if, for  any  $A\C  X$  and  $x\in  \CL A$ there
exists a set $B\C A$ such that $|B|\leq\kappa$ and $x\in \CL  B$.
For a Tychonoff space $X$, the cardinal $iw(X)=\min\{\kappa:$ the
space $X$ has a weaker Tychonoff topology of weight $\kappa\}$ is
called   the   {\it  $i$-weight}  of  $X$.  The  cardinal  $c(X)=
\sup\{|\calU|: \calU\C  \tau^*(X)$  is  disjoint$\}$  is the {\it
Souslin number  of  $X$};  the  spaces  whose  Souslin  number  is
countable are said to have the {\it Souslin property.}

Given a space $X$, a family  $\calN$  of  subsets  of  $X$  is  a
network  of $X$ if for every $U\in \tau(X)$ there exists a family
$\calN'\C  \calN$  such   that  $U=\bigcup\calN'$.   Furthermore,
$nw(X)=\min\{|\calN|:\calN$ is a network in $X\}$.  The  cardinal
$nw(X)$  is  the  {\it  network weight} of $X$; the spaces with a
countable network are  called  {\it  cosmic}.  If  $\kappa$ is an
infinite  cardinal,  then   a   space   $X$   is   said   to   be
$\kappa$-monolithic if $nw(\CL A) \leq\kappa$ for any set $A\C X$
such that $|A|\leq\kappa$. For a set $A\C X$, we say  that  $x\in
X$  is  a  {\it  complete  accumulation  point of $A$} if $|U\cap
A|=|A|$ for every $U\in \tau(x,X)$.

The rest of our  terminology  is  standard and follows \cite{En};
the survey \cite{Ho} and the  book \cite{Ju} can be consulted for
definitions and properties of cardinal invariants.

\section{The results}

We start by giving some results that involve assumptions of  discrete
Lindel\"ofness type on the co-diagonal.
Our main aim is to show  that  it  is  consistent  with  ZFC that discrete
Lindel\"ofness of the co-diagonal of  a  Tychonoff
space $X$ implies the Lindel\"ofness of $X$ and the existence of a weaker
second countable Tychonoff topology on $X$.

\begin{pro}\label{pro:sigcom}
If $X$  is  a  Tychonoff  space  and  $\Delta^c_X$  is discretely
$\sigma$-compact, then $hl(X\times X)=\o$. \end{pro}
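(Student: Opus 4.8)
The plan is to show that $X\times X$ is cosmic, i.e.\ $nw(X\times X)=\o$; since $l(Z)\leq nw(Z)\leq nw(X\times X)$ for every $Z\C X\times X$, this at once yields $hl(X\times X)=\o$. To produce a countable network I will argue that $X$ is $\sigma$-compact and carries a weaker second countable topology. Granting these two facts, write $X=\bigcup_{n\in\o}K_n$ with each $K_n$ compact; if $f\colon X\to M$ is a condensation onto a second countable (hence Hausdorff) space $M$, then each $f|_{K_n}$ is a continuous bijection of the compact space $K_n$ onto a Hausdorff space, hence a homeomorphism, so every $K_n$ is second countable and therefore cosmic. Consequently $X=\bigcup_n K_n$ is cosmic, and then $nw(X\times X)=nw(X)=\o$, as required.

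Both facts above will be derived from the single statement that $\Delta^c_X$ is $\sigma$-compact. Indeed, the projection $\pi\colon \Delta^c_X\to X$ onto the first coordinate is continuous, and it is onto as soon as $|X|\geq 2$ (any $x\in X$ has a partner $y\neq x$ with $(x,y)\in\Delta^c_X$); hence $X=\pi(\Delta^c_X)$ is a continuous image of a $\sigma$-compact space and so is $\sigma$-compact. Moreover a $\sigma$-compact space is Lindel\"of, so $\Delta^c_X$ is Lindel\"of, and then \cite[Theorem 2.1.8]{Ar1} guarantees that $iw(X)\leq\o$, i.e.\ $X$ has a weaker second countable topology. This settles the downstream part of the argument.

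It remains --- and this is the crux --- to upgrade the hypothesis ``$\Delta^c_X$ is discretely $\sigma$-compact'' to ``$\Delta^c_X$ is $\sigma$-compact''. This is the $\sigma$-compact analogue of the theorem of \cite{Tk1} (discretely compact implies compact) and of the Lindel\"of and linearly Lindel\"of versions in \cite{ATW}. The plan is to prove the contrapositive by a transfinite construction: assuming $\Delta^c_X$ is not $\sigma$-compact, I will build a discrete set $D\C\Delta^c_X$ whose closure is not $\sigma$-compact, contradicting the hypothesis. As a first, easy reduction, note that discrete $\sigma$-compactness already forces countable extent: an uncountable closed discrete $E$ would equal its own closure, a discrete $\sigma$-compact space, which must be countable. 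The recursion will use regularity of $X\times X$ to keep the chosen points spread out (so that the resulting set is genuinely discrete) while arranging that its closure either contains an uncountable closed discrete subset or otherwise fails to be covered by countably many compacta.

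The main obstacle is precisely this last step. Unlike compactness, $\sigma$-compactness is not refuted by the failure of a single open cover to have a finite subcover, so there is no ready-made ``witness'' to feed into a free-sequence construction; instead the recursion must diagonalise against all candidate countable families of compact sets simultaneously, and the bookkeeping needed to guarantee that the points produced form a discrete set whose closure escapes every such family is where the real work lies.
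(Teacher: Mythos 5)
Your argument collapses at exactly the point you yourself flag as ``the crux'': nothing in the proposal actually upgrades discrete $\sigma$-compactness of $\Delta^c_X$ to $\sigma$-compactness. You only describe a transfinite construction you would like to carry out and concede that the diagonalisation against all countable families of compacta ``is where the real work lies.'' This is not a small missing detail: discrete reflection of $\sigma$-compactness is not an available theorem (unlike the compact case of \cite{Tk1}), and the Lindel\"of analogue is a well-known open problem of Arhangel'skii cited in this paper. Worse, if your upgrade worked, then your own downstream argument ($X$ $\sigma$-compact plus $iw(X)\leq\o$ gives $X$ cosmic) would answer Question 4.3 of this very paper, which asks whether discrete $\sigma$-compactness of $\Delta^c_X$ implies that $X$ is cosmic. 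So the statement you defer is at least as hard as an open problem posed by the authors, and your proposal contains no proof of it.

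The paper's proof avoids this entirely by never trying to improve the hypothesis on $\Delta^c_X$; instead it reflects a property for which discrete reflection \emph{is} a theorem. First, for every compact $K\subset X$ the co-diagonal $\Delta^c_K$ is a closed subspace of $\Delta^c_X$, hence discretely $\sigma$-compact, so $K$ is metrizable by \cite[Proposition 3.3]{BT}; consequently all compact subsets of $X\times X$ are metrizable. Then, for any discrete $D\subset\Delta^c_X$, the closure of $D$ in $\Delta^c_X$ is a countable union of metrizable compacta, hence has countable network, and so is hereditarily Lindel\"of. Now \cite[Proposition 2.1]{ATW} --- discrete reflection of hereditary Lindel\"ofness --- yields $hl(\Delta^c_X)\leq\o$, and finally $hl(X\times X)=hl(\Delta^c_X)\leq\o$. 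Your downstream steps (the projection argument giving $\sigma$-compactness of $X$, and the condensation argument giving cosmicity of each compact piece) are correct conditionally on $\Delta^c_X$ being $\sigma$-compact, but that conditional is precisely what you cannot supply; you should restructure the argument around reflection of hereditary Lindel\"ofness rather than of $\sigma$-compactness.
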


\begin{proof}
If $K$ is a compact subset of $X$, then the space $\Delta^c_K$ is
discretely  $\sigma$-compact  being   a  closed  subset  of
$\Delta^c_X$. Therefore $K$ is metrizable by Proposition  3.3  of
\cite{BT}.  This  shows that all compact subsets in $X$ and hence
in $X\times X$ are metrizable.   If $D\C \Delta^c_X$ is discrete,
then $\CL D$ is the union of countably  many  metrizable  compact
subsets  (the  bar denotes the closure in $\Delta^c_X$) and hence
$hl(\CL D) \leq nw(\CL D) \leq \o$.  It follows from  Proposition
2.1  of \cite{ATW} that $hl(\Delta^c_X)\leq\o$.  Finally, observe
that $hl(X\times X)= hl(\Delta_X^c)\leq\o$. \end{proof}

The following corollary  solves  Questions  5.8  and 5.9 from the
paper \cite{AJTW}.

\begin{cor}
Suppose that $X$ is  a  Tychonoff  space  and
$\Delta^c_X$ is discretely $\sigma$-compact.

\smallskip
\noindent(a)  If  $X$  is  pseudocompact,  then it is compact and
metrizable.

\smallskip
\noindent(b) If  $X$  is  a  \LSS-space,  then  it  is cosmic.
\end{cor}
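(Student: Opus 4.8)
The plan is to read off both parts from Proposition~\ref{pro:sigcom} together with a single structural observation about $X\times X$. Since $hl(X\times X)=\o$, the space $X\times X$ is regular and hereditarily Lindel\"of, hence perfectly normal; therefore the closed set $\Delta_X$ is a $G_\delta$ in $X\times X$, which is to say that $X$ has a $G_\delta$-diagonal. Moreover $hl(X)\le hl(X\times X)=\o$, so $X$ is Lindel\"of. These two facts will drive everything.

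For part (a) I would first upgrade ``Lindel\"of'' to ``compact''. A Lindel\"of Tychonoff space is normal, and on a normal space pseudocompactness implies countable compactness; a countably compact Lindel\"of space is compact. Thus $X$ is compact, and a compact space with a $G_\delta$-diagonal is metrizable by the classical theorem, so $X$ is compact and metrizable. Alternatively, one may simply invoke the metrizability of compact subspaces of $X$ established inside the proof of Proposition~\ref{pro:sigcom}, now applied to $X$ itself.

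For part (b) the decisive input is the $G_\delta$-diagonal again. Here $X$ is assumed to be a \LSS-space, and there is a ZFC theorem to the effect that every \LSS-space with a $G_\delta$-diagonal has a countable network; applying it yields that $X$ is cosmic. If one wishes to see why the $G_\delta$-diagonal is the natural hypothesis, it is worth recalling that a small diagonal alone is not known to suffice even in the compact case, whereas a $G_\delta$-diagonal does; the stronger conclusion $hl(X\times X)=\o$ furnished by discrete $\sigma$-compactness is precisely what delivers the $G_\delta$-diagonal and hence a clean argument in ZFC.

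I expect the genuine obstacle to be this last cited theorem. The passage from $hl(X\times X)=\o$ to the $G_\delta$-diagonal is routine once perfect normality is noted, and the compact case of (a) is classical; but extracting a countable network from the \LSS-property plus a $G_\delta$-diagonal is substantial. In a self-contained treatment I would prove it by showing that such a space condenses onto a second countable space, so that $iw(X)=\o$, using the $G_\delta$-diagonal to separate points on each compact member of a compact cover and the network modulo that cover to globalize; one then combines the countable network modulo the compact cover with a countable base of the second countable image to produce a countable network on $X$.
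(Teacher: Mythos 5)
Your proof is correct, and it runs on the same engine as the paper's: everything is extracted from Proposition~\ref{pro:sigcom}. The difference is in the pivot and in the finishing results invoked. The paper passes from $hl(X\times X)\leq\o$ directly to $iw(X)\leq\o$ via \cite[Theorem 2.1.8]{Ar1}, and then quotes two ready-made facts: a pseudocompact Tychonoff space with $iw(X)\leq\o$ is compact and metrizable (\cite[Problem 140]{Tkb1}) for (a), and a \LSS-space with $iw(X)\leq\o$ is cosmic (\cite[Theorem 2]{Tk2}) for (b). You instead pivot on the $G_\delta$-diagonal, obtained from perfect normality of the hereditarily Lindel\"of Tychonoff square; for Lindel\"of Tychonoff spaces this is equivalent to $iw(X)\leq\o$, so the two pivots are interchangeable. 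In (a) your chain (Lindel\"of $\Rightarrow$ normal, normal plus pseudocompact $\Rightarrow$ countably compact, countably compact plus Lindel\"of $\Rightarrow$ compact, then the classical theorem that a compact space with a $G_\delta$-diagonal is metrizable) is more elementary and self-contained than the paper's citation, and your fallback --- applying the metrizability of compact subspaces established inside the proof of Proposition~\ref{pro:sigcom} to $X$ itself once compactness is known --- also works. In (b) the theorem you cite (every \LSS-space with a $G_\delta$-diagonal has a countable network) is indeed a ZFC theorem, and your sketch of its proof --- first condense onto a second countable space, then combine a countable base of the image with the countable network modulo a compact cover --- is exactly the standard argument and is in substance the route through \cite[Theorem 2]{Tk2} that the paper takes. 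In short: same skeleton, equivalent intermediate invariants; your version of (a) buys independence from the book reference, while the paper's version is shorter because both parts are dispatched by a single invariant, $iw(X)\leq\o$.
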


\begin{proof}
Apply Proposition \ref{pro:sigcom}  to  see  that  in both cases,
$hl(X\times X)\leq\o$ and hence $iw(X)\leq\o$ (see  \cite[Theorem
2.1.8]{Ar1}).   For the case (a) this implies that $X$ is compact
and metrizable by Problem  140  of  the book \cite{Tkb1}. For the
case   (b)   we   can   conclude   that   $nw(X)\leq\o$  applying
\cite[Theorem 2]{Tk2}. \end{proof}

\begin{thm}\label{thm:cdiagdl}
Suppose that $X$ is a Hausdorff space such that  $\Delta^c_X$  is
discretely  Lindel\"of.  Then $hl(X)\leq\cont$ and hence $|X|\leq
2^\cont$.  \end{thm}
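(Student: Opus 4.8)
The plan is to prove $hl(X)\leq\cont$ by bounding the spread-like invariants of $X$ using the discrete Lindel\"ofness of $\Delta^c_X$, and then invoking the standard inequality $hl(X)\leq 2^{s(X)}$ (or a direct argument via free sequences) to conclude; the final cardinality bound $|X|\leq 2^\cont$ then follows from the general inequality $|X|\leq 2^{hl(X)}$ for Hausdorff spaces.

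First I would show that every discrete subspace of $X$ has cardinality at most $\cont$. Suppose $D\C X$ is discrete. Then the off-diagonal copy $\{(d,d'):d,d'\in D,\ d\neq d'\}$ sits inside $\Delta^c_X$, and I expect this set (or a suitable discrete subset of it built from $D$) to be discrete in $\Delta^c_X$. By hypothesis its closure in $\Delta^c_X$ is Lindel\"of. The key observation is that a discrete-in-$X$ set $D$ of size $\kappa$ produces a discrete subset of $\Delta^c_X$ of size $\kappa$ whose closure, being Lindel\"of, cannot contain an uncountable closed discrete subspace; combining this with the Hausdorff separation (used to keep pairs off the diagonal) should force $|D|\leq\cont$. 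More precisely, I would try to embed a $\kappa$-sized discrete family into $\Delta^c_X$ and use that a Lindel\"of space has cardinality bounded by $2^\cont$-type estimates, or better, directly bound the spread $s(X)$ by $\cont$.

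The cleanest route is probably to bound $s(X\times X)$ or to argue that $\Delta^c_X$ itself has small spread. Since $\Delta^c_X$ is discretely Lindel\"of, every discrete subset $E\C\Delta^c_X$ has Lindel\"of closure, so $E$ itself (being discrete and a subset of its own Lindel\"of closure) satisfies $|E|\leq l(\CL E)\cdot\ldots$; a Lindel\"of space can contain a closed discrete set of size at most $\o$, but $E$ need not be closed in $\CL E$. Here I would use that in a Lindel\"of space of countable extent the spread is controlled, or pass through the weak-Lindel\"of/cellularity route. The honest statement I would aim for is $s(\Delta^c_X)\leq\cont$, from which $hl(\Delta^c_X)\leq 2^{s}\leq 2^\cont$ would be too weak; instead I would directly target $hl(X)\leq\cont$ via a free-sequence argument, showing any free sequence in $X$ has length at most $\cont$ by mapping its pairs into $\Delta^c_X$ and using Lindel\"ofness of the relevant closures to cap the length.

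The main obstacle will be the passage from "discrete subsets of $\Delta^c_X$ have Lindel\"of closures" to a genuine cardinal bound on discrete or free sequences in $X$ itself, because Lindel\"ofness bounds \emph{closed discrete} subsets by $\o$ but not arbitrary discrete subsets. I expect the fix is to choose, for a discrete $D\C X$ with $|D|=\kappa>\cont$, points witnessing distinctness and feed the resulting diagonal-avoiding discrete set of pairs into the hypothesis, then extract from its Lindel\"of closure a contradiction with $\kappa>\cont$ using a cardinal inequality for Lindel\"of (or discretely Lindel\"of) spaces such as $|Y|\leq 2^{l(Y)\cdot\psi(Y)}$ or the bound coming from the earlier cited result that closures of discrete sets inherit good invariants. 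Getting the right discrete set of pairs in $\Delta^c_X$ and verifying its closure avoids $\Delta_X$ enough to apply the Hausdorff hypothesis is the delicate step; once $s(X)\leq\cont$ is in hand, $hl(X)\leq 2^{s(X)}\leq 2^\cont$ together with a sharper analysis giving $hl(X)\leq\cont$ finishes the proof, and $|X|\leq 2^{hl(X)}\leq 2^\cont$ is then immediate.
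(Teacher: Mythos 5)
Your proposal correctly identifies the central obstacle --- Lindel\"ofness of the closure of a discrete set of pairs bounds only \emph{closed} discrete subsets, not the set itself --- but it never actually overcomes it, and the fixes you gesture at do not work. A Lindel\"of (even compact) Hausdorff space can have arbitrarily large cardinality, so ``the closure is Lindel\"of'' yields no cardinality bound by itself; and the inequality $|Y|\leq 2^{l(Y)\cdot\psi(Y)}$ you invoke is not a valid cardinal inequality without a tightness hypothesis (the correct form is Shapirovskii's $|Y|\leq 2^{l(Y)t(Y)\psi(Y)}$), nor do you have any bound on $\psi$ of the closure in a space that is merely Hausdorff. The paper's way out is a specific trick absent from your plan: for discrete $D\C X$ pass to $F=\CL D$ and observe that $(D\times D)\mi\Delta_F$ is discrete \emph{and dense} in $\Delta^c_F$, which is itself discretely Lindel\"of (being closed in $\Delta^c_X$); density upgrades ``closures of discrete sets are Lindel\"of'' to the statement that the \emph{whole} co-diagonal $\Delta^c_F$ is Lindel\"of. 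Then, covering $\Delta^c_F$ by boxes $U\times V$ with $U\cap V=\e$ (Hausdorffness) and extracting a countable subcover produces a countable $T_2$-separating family of open sets on $F$, whence $|F|\leq\cont$ by Hodel's theorem. It is this conversion of Lindel\"ofness into a separating family, not a $2^{l\cdot\psi}$-type estimate, that yields the bound.

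There is a second gap at the end: even if you did establish $s(X)\leq\cont$, that only gives $hl(X)\leq 2^{s(X)}\leq 2^\cont$, and your ``sharper analysis giving $hl(X)\leq\cont$'' is just asserted, with the free-sequence route being off target ($hl$ is calibrated by right-separated subspaces, not free sequences). The paper gets $hl(X)\leq\cont$ precisely because its bound is on $|\CL D|$, not merely $|D|$: since $hl(X)$ is the supremum of the cardinalities of right-separated (scattered) subspaces, and every scattered subspace $R$ has a dense discrete subset $D_R$, one gets $|R|\leq|\CL{D_R}|\leq\cont$ for every such $R$, hence $hl(X)\leq\cont$ and then $|X|\leq 2^{hl(X)}\leq 2^\cont$. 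So the two missing ideas are the density of the pair-set in the co-diagonal of the closure, and the bound on closures of discrete sets combined with the right-separated characterization of $hl$; without them the proposal does not close.
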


\begin{proof}
Take any discrete subspace  $D\C  X$  and consider the set $F=\CL
D$.  Since $\Delta^c_F$ is a closed subspace of $\Delta^c_X$,  it
must  be  discretely  Lindel\"of.  It is straightforward to check that the
set $(D\times D) \mi  \Delta_F$ is discrete and dense in $\Delta^c_F$,
hence $\Delta^c_F$ is Lindel\"of and so  is  $F$.  Using  the
Lindel\"of  property  of $\Delta_F^c$ it is easy to find a family
$\calU=\{U_n,V_n: n\in \o\}$  of  open  subsets  of $F$ such that
$U_n \cap V_n=\e$ for every  $n\in\o$  and  $\bigcup\{U_n  \times
V_n:   n\in\o\}=  \Delta_F^c$.   It  is immediate that the family
$\calU$ is (even $T_2$-)separating,  so  $|F|\leq\cont$ by \cite[Theorem
3.7(a)]{Ho}. This means that $|\CL D|\leq \cont$ holds for any discrete
$D\C  X$  and,  consequently,  $hl(X)\leq\cont$.
Indeed, this is immediate from the fact that $hl(X)$ is also the
supremum of the sizes of all right separated (i.e. scattered)
subspaces of $X$. Then  $|X|\leq
2^{hl(X)}\leq 2^\cont$ follows. \end{proof}

The  following  corollary gives a consistent answer to Problem 4.5
from the paper \cite{BT}.

\begin{cor}
Assume  that  ${\cont}<\o_\o$ and $X$ is a regular space such
that its co-diagonal  $\Delta^c_X$  is   discretely   Lindel\"of.
Then  actually
$\Delta^c_X$ is Lindel\"of  and  hence  $l(X\times  X)=iw(X)=\o$.
\end{cor}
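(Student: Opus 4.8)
The plan is to first harvest everything the earlier results already give and then isolate the single genuinely new estimate. Since $X$ is regular it is Hausdorff, so Theorem \ref{thm:cdiagdl} applies and yields $hl(X)\le\cont$. I would first record that $X$ itself is Lindel\"of: exactly as in the proof of Theorem \ref{thm:cdiagdl}, for any discrete $D\C X$ the set $F=\CL D$ has $\Delta^c_F$ Lindel\"of (because $(D\times D)\mi\Delta_F$ is dense and discrete in the closed, hence discretely Lindel\"of, subspace $\Delta^c_F$ of $\Delta^c_X$), so $F$ is Lindel\"of. Thus $X$ is discretely Lindel\"of and therefore linearly Lindel\"of by Proposition 2.1 of \cite{ATW}; as $l(X)\le hl(X)\le\cont<\o_\o$, linear Lindel\"ofness upgrades this to $l(X)=\o$. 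The same Proposition 2.1 of \cite{ATW}, applied now to the discretely Lindel\"of space $\Delta^c_X$, shows that $\Delta^c_X$ is linearly Lindel\"of; note also that every closed discrete subset of $\Delta^c_X$ is then countable.

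Next I would reduce the whole statement to one inequality. Suppose we have shown $l(\Delta^c_X)\le\cont$. Since $\Delta^c_X$ is linearly Lindel\"of and no uncountable cardinal below $\o_\o$ has countable cofinality, a linearly Lindel\"of space whose Lindel\"of number is $<\o_\o$ is automatically Lindel\"of; hence $l(\Delta^c_X)\le\cont<\o_\o$ forces $\Delta^c_X$ to be Lindel\"of. Everything else is then formal: $X\times X=\Delta^c_X\cup\Delta_X$ is a union of two Lindel\"of subspaces (here $\Delta_X\cong X$ is Lindel\"of), so $l(X\times X)=\o$; and since the regular Lindel\"of space $X$ is Tychonoff, the Lindel\"ofness of $\Delta^c_X$ gives $iw(X)=\o$ by \cite[Theorem 2.1.8]{Ar1}.

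So the heart of the matter is the estimate $l(\Delta^c_X)\le\cont$, and this is the step where the arithmetic hypothesis must be spent. The approach I would take is an elementary submodel reflection. Put $\kappa=\cont$; because $\cont<\o_\o$ we have $\kappa^{\o}=\kappa$ (Hausdorff's formula), so there is a countably closed $M\prec H(\theta)$ with $|M|=\kappa$, $\kappa\C M$ and $X,\calU\in M$, where $\calU$ is a given open cover of $\Delta^c_X$ which, using regularity, I may assume consists of rectangles $U\times V$ with $\CL U\cap\CL V=\e$. The reflection step uses $hl(X)\le\kappa$ coordinatewise: for each $a\in X\cap M$ the slice $\{a\}\times(X\mi\{a\})$ is covered by $\{U\times V\in\calU: a\in U\}$, and since this slice has Lindel\"of number $\le hl(X)\le\kappa$, elementarity produces inside $M$ a subfamily of size $\le\kappa$ covering it; as $\kappa\C M$, all its members lie in $M$. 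Consequently the remainder $C=\Delta^c_X\mi\bigcup(\calU\cap M)$ is a closed subset of $\Delta^c_X$ both of whose projections miss $M$.

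The main obstacle is precisely to show that this $C$ is empty, i.e. that $\calU\cap M$ (a subfamily of size $\le\kappa$) already covers $\Delta^c_X$. Here I would exploit that $C$, being closed in the linearly Lindel\"of space $\Delta^c_X$, is itself linearly Lindel\"of with countable extent, and combine this with the countable closedness of $M$ to manufacture a point of $C$ whose coordinates lie in $M$ -- contradicting that both projections of $C$ avoid $M$. Closing this reflection cleanly is the delicate point; the slice argument and the arithmetic $\kappa^{\o}=\kappa$ are exactly tailored to it, and once $C=\e$ is established the bound $l(\Delta^c_X)\le|\calU\cap M|\le\kappa=\cont$ follows, completing the proof.
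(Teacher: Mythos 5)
Your outer reduction is correct and is essentially the paper's own: once one knows $l(\Delta^c_X)<\o_\o$, the facts that discrete Lindel\"ofness implies linear Lindel\"ofness (Proposition 2.1 of \cite{ATW}) and that a linearly Lindel\"of space $Z$ with $l(Z)<\o_\o$ is Lindel\"of yield that $\Delta^c_X$ is Lindel\"of; then $l(X\times X)=\o$ because $X\times X=\Delta^c_X\cup\Delta_X$, and $iw(X)=\o$ follows from \cite[Theorem 2.1.8]{Ar1}. The problem is that you never prove the one estimate that you yourself identify as ``the heart of the matter'', namely $l(\Delta^c_X)\leq\cont$. Your elementary submodel scheme stops exactly at the crucial step: you construct the closed set $C=\Delta^c_X\mi\bigcup(\calU\cap M)$ whose two projections miss $M$, and then say you would ``manufacture'' a point of $C$ with coordinates in $M$, conceding that ``closing this reflection cleanly is the delicate point.'' That is a declaration of intent, not a proof, so the argument has a genuine gap at its center.

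Moreover, it is doubtful that the reflection can be closed by the means you name. You have no character or tightness control over $\Delta^c_X$, and $|X|$ may be as large as $2^\cont>|M|$, so nothing a priori prevents a nonempty closed $C$ all of whose points have both coordinates outside $M$; linear Lindel\"ofness of $C$, countable extent, and countable closedness of $M$ provide no mechanism for producing a point of $C$ whose coordinates lie in $M$ (reflection arguments of this kind normally need $\chi$ or $t$ bounds to transfer covering information from $M$ to points outside $M$). The paper avoids all of this: it reads Theorem \ref{thm:cdiagdl} as bounding the co-diagonal itself, $l(\Delta^c_X)\leq hl(\Delta^c_X)\leq\cont<\o_\o$, and then runs exactly your reduction. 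Your implicit worry---that Theorem \ref{thm:cdiagdl} literally bounds $hl(X)$ rather than $hl(\Delta^c_X)=hl(X\times X)$, and hereditary Lindel\"ofness is not productive---is a legitimate one, but the cheap repair is via cardinality: Theorem \ref{thm:cdiagdl} gives $|X|\leq 2^\cont$, hence $l(\Delta^c_X)\leq|X\times X|\leq 2^\cont$, which makes the whole argument go through under the hypothesis $2^\cont<\o_\o$ (the form stated in the paper's introduction). Either way, the needed bound comes from Theorem \ref{thm:cdiagdl}, not from an elementary submodel argument, and your version of the latter is incomplete. (A small further quibble: $\cont^\o=\cont$ holds in ZFC, so invoking $\cont<\o_\o$ for that equality is superfluous.)
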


\begin{proof}
It follows from  Theorem  \ref{thm:cdiagdl}  that  $l(\Delta^c_X)
\leq  hl(\Delta^c_X) \leq \cont<\o_\o$.    Every   discretely
Lindel\"of space $Z$ is  Lindel\"of  if $l(Z)<\o_\o$, so
$\Delta^c_X$ is Lindel\"of.  It is immediate from this that $l(X\times  X)=
l(\Delta^c_X)=\o$ .  But then  $X$  is  a regular Lindel\"of  space  with  a
$G_\delta$-diagonal,   which   implies   $iw(X)=\o$   by
\cite[Theorem 2.1.8]{Ar1}. \end{proof}

Next we introduce and study a couple of weakenings of the  discretely
Lindel\"of property.

\begin{defi} \rm
A space $X$ will be called {\it almost discretely Lindel\"of}  if
for  any discrete set $D\C X$, there exists a Lindel\"of set $L\C
X$ such that $D\C L$. \end{defi}

The following proposition lists a few basic properties of this concept.
Its proof is straightforward and so is left to the reader.

\begin{pro}
(a) if $X$ is almost discretely Lindel\"of, then $ext(X)\leq \o$.

\pritem{(b)}
Any  discretely  Lindel\"of  space  is   almost   discretely
Lindel\"of.

\pritem{(c)}
Any space of countable spread if almost discretely Lindel\"of.

\pritem{(d)} Any continuous image of an almost discretely
Lindel\"of space is almost discretely
Lindel\"of.

\end{pro}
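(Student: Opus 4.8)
The plan is to treat the four parts essentially independently, since each should follow quickly from the definitions; only part (d) requires an actual construction. Parts (b) and (c) are the most immediate. For (b), if $X$ is discretely Lindel\"of then for any discrete $D\C X$ the closure $\CL D$ is a Lindel\"of set containing $D$, so we take $L=\CL D$. For (c), if $X$ has countable spread then every discrete $D\C X$ is countable, hence Lindel\"of as a subspace, and we take $L=D$.

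For (a), I would invoke the standard fact that a Lindel\"of space has countable extent. Let $D\C X$ be closed and discrete and choose a Lindel\"of $L\C X$ with $D\C L$. Since $D$ is closed in $X$ it is closed in $L$, and it remains discrete in $L$; a closed discrete subset of the Lindel\"of space $L$ must therefore be countable, so $|D|\leq\o$. As this holds for every closed discrete $D\C X$, we conclude $ext(X)\leq\o$.

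The only part needing a genuine argument is (d), and that is where I expect the main (though modest) obstacle: transporting a discrete set in the image back to a discrete set in the domain. Let $f\colon X\to Y$ be a continuous surjection with $X$ almost discretely Lindel\"of, and fix a discrete $D\C Y$. For each $d\in D$ pick an open $U_d\C Y$ with $U_d\cap D=\{d\}$ and choose a point $x_d\in f^{-1}(d)$; set $E=\{x_d:d\in D\}$. The key verification is that $E$ is discrete in $X$: the open set $f^{-1}(U_d)$ contains $x_d$, and if $x_{d'}\in f^{-1}(U_d)$ then $d'=f(x_{d'})\in U_d\cap D=\{d\}$, forcing $x_{d'}=x_d$, so $x_d$ is isolated in $E$. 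Now apply almost discrete Lindel\"ofness of $X$ to obtain a Lindel\"of $M\C X$ with $E\C M$; since continuous images of Lindel\"of spaces are Lindel\"of, $L=f(M)$ is a Lindel\"of subspace of $Y$, and $D=f(E)\C f(M)=L$, as required.
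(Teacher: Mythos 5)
Your proposal is correct in all four parts, and the arguments are exactly the standard ones: the paper itself gives no proof, stating only that it ``is straightforward and so is left to the reader,'' so your write-up simply fills in the intended routine details (taking $L=\CL D$ for (b), $L=D$ for (c), the closed-discrete-in-Lindel\"of fact for (a), and lifting a discrete set along chosen points of fibers for (d)). Nothing further is needed.
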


\begin{thm}
If $X$ is  an  almost  discretely Lindel\"of Hausdorff space such
that $\psi(X)\leq\o$ and $t(X)\leq\o$, then  $|X| \leq 2^\cont$. \end{thm}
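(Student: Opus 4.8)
The plan is to run an Arhangel'ski\u\i-type closing-off argument based on a single elementary submodel, using the hypotheses $t(X)\le\o$ and $\psi(X)\le\o$ exactly as countable tightness and pseudocharacter enter the classical cardinality bounds, and feeding in almost discrete Lindel\"ofness only through its consequence $ext(X)\le\o$. Fix a sufficiently large regular cardinal $\theta$ and choose an elementary submodel $M\prec H(\theta)$ with $\{X,\tau(X)\}\C M$, with $2^\cont+1\C M$ (so every ordinal $\leq 2^\cont$ lies in $M$), with $|M|=2^\cont$, and with $[M]^{\le\o}\C M$; such an $M$ exists precisely because $(2^\cont)^\o=2^\cont$. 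The aim is to prove $X\C M$, which at once gives $|X|\le|M|=2^\cont$.

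First I would verify that $Y=X\cap M$ is closed in $X$. If $A\C Y$ is countable then $A\in M$ (as $[M]^{\le\o}\C M$), so $\CL A\in M$ by elementarity; since $\CL A$ is separable and Hausdorff, $|\CL A|\le 2^{2^\o}=2^\cont$, and because $2^\cont\C M$ this forces $\CL A\C M$. Consequently $M$ absorbs the closures of all its countable subsets. Now if $x\in\CL Y$, then $t(X)\le\o$ yields a countable $A\C Y$ with $x\in\CL A\C M$, whence $x\in X\cap M=Y$; thus $Y$ is closed.

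It then remains to show $Y=X$, and here is where I would use almost discrete Lindel\"ofness. The decisive reduction is the claim that $Y$ is Lindel\"of. Granting this, suppose toward a contradiction that there is $q\in X\mi Y$. Using $\psi(X)\le\o$, fix in $M$ an assignment $x\mapsto\{W_n(x):n\in\o\}$ of open pseudobases with $\bigcap_nW_n(x)=\{x\}$; for each $y\in Y$ choose $n(y)$ with $q\notin W_{n(y)}(y)$, which is possible since $q\neq y$. The family $\calU=\{W_{n(y)}(y):y\in Y\}$ covers $Y$ and no member contains $q$. By Lindel\"ofness of $Y$ there is a countable subfamily $\calU'\C\calU$ covering $Y$; each member of $\calU'$ belongs to $M$ (because $y\in Y\C M$ and the pseudobase assignment is in $M$), so $\calU'\in M$ and hence $G=\bigcup\calU'\in M$. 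Then $Y\C G$ while $q\notin G$, so $X\mi G$ is a nonempty closed set lying in $M$; by elementarity it meets $M$, producing a point $z\in(X\mi G)\cap M=Y\mi G$, which contradicts $Y\C G$. Therefore $Y=X$ and $|X|\le 2^\cont$.

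The main obstacle is thus the claim that $Y=X\cap M$ is Lindel\"of; mere countable extent of $X$ will not suffice, so the full strength of the almost discrete Lindel\"of property has to be exploited at this point. The strategy I would pursue is to assume an open cover of the closed set $Y$ with no countable subcover and to manufacture from it, by a recursion of length $\o_1$ that uses the pseudobases to separate points and the countable-closure property of $M$ to locate accumulation points, a discrete subspace $\{x_\alpha:\alpha<\o_1\}\C Y$; covering this discrete set by a Lindel\"of subspace $L$ (almost discrete Lindel\"ofness) whose defining data can be reflected into $M$, one aims to contradict the failure of a countable subcover. The genuinely delicate point -- and the step I expect to be hardest -- is to arrange the recursion so that the constructed set is not merely discrete but \emph{closed} discrete, so that $ext(X)\le\o$ is actually violated; guaranteeing closedness while keeping every parameter of the construction inside $M$ is precisely where the hypothesis that $X$ is almost discretely Lindel\"of, rather than just of countable extent, must do the work.
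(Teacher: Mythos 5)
Your argument is incomplete at exactly the point you flag, and unfortunately the missing claim is not just hard --- it is false, so the strategy cannot be repaired. The assertion that $Y=X\cap M$ is Lindel\"of fails consistently under the hypotheses of the theorem: take $X$ to be Ostaszewski's space (cited in this very paper), which is hereditarily separable, hence of countable spread, hence almost discretely Lindel\"of (every discrete subspace is countable and is therefore itself a Lindel\"of set covering it), first countable and Hausdorff, but countably compact and non-compact, hence not Lindel\"of, and of cardinality $\omega_1$. Since your $M$ contains every ordinal $\leq 2^{\cont}$ and $X\in M$ with $|X|=\omega_1\leq 2^{\cont}$, the usual bijection argument gives $X\subseteq M$, so $Y=X\cap M=X$ is not Lindel\"of. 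Thus no ZFC proof of your key claim exists. The same example explains why your proposed recursion must fail: from an open cover of $Y$ with no countable subcover one can in general extract only an uncountable \emph{right-separated} subspace, never an uncountable \emph{closed discrete} one; indeed $ext(X)\leq\omega$ holds in the counterexample (it is countably compact), so the contradiction with countable extent you hope to manufacture is unattainable. (The two parts of your argument that you did carry out --- closedness of $X\cap M$ via countable tightness, and the deduction of $Y=X$ from Lindel\"ofness of $Y$ via countable pseudocharacter --- are correct; the problem is solely the unprovable middle step.)

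The paper's proof avoids this trap by never trying to produce a Lindel\"of piece of $X$; instead it bounds the \emph{spread}. Any discrete $D\subseteq X$ lies in a Lindel\"of subspace $L$, and Arhangel'skii's inequality $|L|\leq 2^{l(L)\cdot\psi(L)\cdot t(L)}\leq 2^{\omega}=\cont$ gives $|D|\leq\cont$, so $s(X)\leq\cont$; then the Hajnal--Juh\'asz bound $|X|\leq 2^{s(X)\cdot\psi(X)}$ for Hausdorff spaces yields $|X|\leq 2^{\cont}$ directly. Note that this route uses almost discrete Lindel\"ofness only to control discrete (not closed discrete) subspaces, which is exactly the strength the hypothesis actually has; your approach demanded a covering-type consequence (Lindel\"ofness of a large closed subspace) that the hypothesis does not deliver.
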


\begin{proof}
If  $D\C  X$ is discrete, then there exists a Lindel\"of subspace
$L$ of the space $X$ such  that $D\C L$. But then it follows from $\psi(L)
\cdot t(L) \leq \psi(X) \cdot t(X)\leq  \o$  that  $|L|\leq\cont$
(see   \cite[Theorem   1.1.10]{Ar1})    and   hence   $|D|   \leq
|L|\leq\cont$. Thus we have shown that
$s(X)\leq\cont$.  But then we may conclude  $|X|\leq  2^{s(X)\cdot\psi(X)}\leq  2^\cont$
applying say 2.15 (a) of \cite{Ju}.
\end{proof}

Coupling the above proof with the argument we used in the proof of Theorem \ref{thm:cdiagdl} we
get the following result.

\begin{cor}
If $X$ is an almost  discretely  Lindel\"of Hausdorff space of character
$\chi(X) \le \o$,
then $hl(X) \leq \cont$ (and hence $|X| \leq
2^\cont$). \end{cor}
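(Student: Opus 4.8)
The plan is to combine the cardinality estimate from the proof of the previous theorem with the right separated reformulation of $hl$ that was used in the proof of Theorem \ref{thm:cdiagdl}. Recall from that proof that $hl(X)$ equals the supremum of the cardinalities of all right separated (equivalently, scattered) subspaces of $X$. If $R\C X$ is such a subspace, then its set of isolated points is discrete and, $R$ being scattered, dense in $R$; moreover a discrete subspace of $R$ is discrete in $X$ since the subspace topology is transitive. Thus $R$ sits inside the $X$-closure of one of its discrete subsets, and consequently, once we know that $|\CL D|\le\cont$ for every discrete $D\C X$, we obtain $|R|\le\cont$ for every right separated $R$, hence $hl(X)\le\cont$ and finally $|X|\le 2^{hl(X)}\le 2^\cont$. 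So the whole task reduces to bounding the $X$-closures of discrete subsets.

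First I would bound the discrete sets themselves exactly as in the preceding proof. Given a discrete $D\C X$, almost discrete Lindel\"ofness yields a Lindel\"of $L\C X$ with $D\C L$; since $\psi(L)\cdot t(L)\le\chi(X)\le\o$, the same cardinal inequality invoked in that proof gives $|L|\le\cont$, whence $|D|\le\cont$. The new ingredient is the passage from $|D|$ to $|\CL D|$: here I would use that $\chi(X)\le\o$, so $X$ is first countable and Hausdorff, and therefore every point of $\CL D$ is the limit of a sequence in $D$. Sending each convergent sequence to its (unique) limit shows that $|\CL D|\le|D|^\o\le\cont^\o=\cont$, as required.

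The only genuinely delicate step is this last one, and it is precisely where the hypothesis is strengthened from $\psi(X)\cdot t(X)\le\o$ (as in the previous theorem) to $\chi(X)\le\o$: a bound on a discrete set carries no information about its closure in general, and controlling $\CL D$ through convergent sequences really needs first countability rather than mere countable pseudocharacter and tightness. Everything else is a verbatim transcription of earlier work — the Lindel\"of cover together with the Arhangel'skii-type estimate from the preceding proof, and the right separated characterization of $hl$ from the proof of Theorem \ref{thm:cdiagdl}.
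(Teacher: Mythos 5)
Your proposal is correct and takes essentially the same route as the paper: the paper's "proof" is the single remark that one should couple the preceding theorem's argument with that of Theorem \ref{thm:cdiagdl}, and your write-up is exactly that combination --- bound each discrete $D$ by $\cont$ via a Lindel\"of superset, then use the right separated (scattered) characterization of $hl$. Your key added step, $|\CL D|\le |D|^\o\le \cont^\o=\cont$ from first countability plus Hausdorffness, is precisely the intended reason the hypothesis is strengthened from $\psi(X)\cdot t(X)\le\o$ to $\chi(X)\le\o$.
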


We do not know if the upper bound $2^\cont$ for the cardinality can be improved to
$\cont$ in the above two results.

\smallskip

The second new concept we introduce is the {\em weakly  linearly  Lindel\"of} property
that figures in the title of our paper. As we shall see, it is actually a weakening of
the previously treated almost  discretely  Lindel\"of property.
Our aim is to  prove that every monotonically normal and weakly linearly
Lindel\"of   space is Lindel\"of, as is stated in the title.
We think, however, that this new concept is also interesting in itself.

\begin{defi} \label{def:wll}
\rm
We  say that a space $X$ is {\it weakly linearly Lindel\"of} \ if
for any family $\calU\C \tau^*(X)$ such that $\kappa= |\calU|$ is
an uncountable regular cardinal,  we  can  find  a point $x\in X$
such  that  every  $V\in  \tau(x,X)$   intersects   $\kappa$-many
elements  of  $\calU$. Such a point  $x$ is called a {\it complete
accumulation point} of $\calU$. \end{defi}

The following result implies that in the definition
of  weak  linear  Lindel\"ofness we could have restricted ourselves
to {\em disjoint} families $\calU$ of open sets.

\begin{pro}\label{pro:disj}
Let $X$ be any space and assume that $\calU\C \tau^*(X)$ is such that $\kappa= |\calU|$ is
a regular cardinal, moreover $\calU$ has no complete accumulation point.
Then there is a disjoint family $\calV \C \tau^*(X)$ with $\kappa= |\calV|$
such that $\calV$ has no complete accumulation point either.
\end{pro}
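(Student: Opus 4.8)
The plan is to first restate the hypothesis pointwise, then shrink the members of $\calU$ so that each becomes ``small'' relative to the whole family, and finally extract a disjoint subfamily by a transfinite recursion powered by the regularity of $\kappa$.

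Since $\calU$ has no complete accumulation point, the negation of Definition \ref{def:wll} gives, for every $x\in X$, some $W_x\in \tau(x,X)$ that meets fewer than $\kappa$ members of $\calU$. Enumerating $\calU=\{U_\alpha:\alpha<\kappa\}$ faithfully and choosing a point $x_\alpha\in U_\alpha$ for each $\alpha$, I would put $G_\alpha=U_\alpha\cap W_{x_\alpha}$. Each $G_\alpha$ is a nonempty open set contained in $U_\alpha$, and because $G_\alpha\C W_{x_\alpha}$ it meets fewer than $\kappa$ of the sets $U_\beta$ and hence, a fortiori, fewer than $\kappa$ of the sets $G_\beta$. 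This shrinking is the crux of the argument: it converts the global hypothesis ``no complete accumulation point'' into the usable local condition that every member of $\{G_\alpha\}$ meets $<\kappa$ of the others. (Without it the $U_\alpha$ could, for instance, all be dense and pairwise intersecting, in which case no disjoint subfamily exists at all — but that situation is precisely what the hypothesis forbids.)

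With this local condition in hand I would build by recursion on $\eta<\kappa$ a sequence of pairwise distinct indices $\alpha_\eta$ such that the sets $G_{\alpha_\eta}$ are pairwise disjoint. At stage $\eta$, the ``forbidden'' indices $\beta$ — those already used, or those for which $G_\beta$ meets some previously chosen $G_{\alpha_\xi}$ with $\xi<\eta$ — form a union of at most $|\eta|<\kappa$ sets, each of size $<\kappa$; by the regularity of $\kappa$ this union still has size $<\kappa$, so an admissible fresh index $\alpha_\eta$ is always available and the recursion runs through all $\kappa$ stages. Setting $\calV=\{G_{\alpha_\eta}:\eta<\kappa\}$ yields a disjoint family of exactly $\kappa$ nonempty open sets, since pairwise disjoint nonempty sets are automatically distinct.

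The last thing to check is that $\calV$ inherits the absence of a complete accumulation point, and here I would reuse the neighborhoods $W_x$. For any $x\in X$, if $W_x\cap G_{\alpha_\eta}\neq\e$ then $W_x\cap U_{\alpha_\eta}\neq\e$, and since $\eta\mapsto\alpha_\eta$ is injective while $W_x$ meets $<\kappa$ of the $U_\beta$, it follows that $W_x$ meets $<\kappa$ of the $G_{\alpha_\eta}$; thus no $x$ is a complete accumulation point of $\calV$. I expect the main obstacle to be precisely the shrinking step together with the bookkeeping of the recursion — realizing that one must descend from $U_\alpha$ to the smaller $G_\alpha$ before disjointifying, verifying via regularity that $\kappa$ steps are actually realizable, and confirming that the property transfers back to $\calV$.
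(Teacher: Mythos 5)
Your proof is correct and takes essentially the same route as the paper's: both convert the absence of a complete accumulation point into the key fact that each member of $\mathcal{U}$ contains a non-empty open subset meeting fewer than $\kappa$ members of the family, and then run a transfinite recursion of length $\kappa$, powered by the regularity of $\kappa$, to obtain the disjoint family; you merely perform all the shrinking upfront (the sets $G_\alpha$), whereas the paper interleaves the shrinking with the recursion. Your final verification that $\mathcal{V}$ has no complete accumulation point is exactly the argument the paper leaves implicit in its closing ``Clearly, then $\mathcal{V}$ is as required.''
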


\begin{proof}
We are going to define, by transfinite recursion on $\alpha < \kappa$, sets $U_\alpha \in \calU$
and $V_\alpha \in \tau^*(X)$ with $V_\alpha \subset U_\alpha$ such that $\alpha \ne \beta$ implies
both $U_\alpha \ne U_\beta$ and $V_\alpha \cap V_\beta \ne \emptyset$. Clearly, then
$\calV = \{V_\alpha : \alpha < \kappa\}$ is as required.

So, assume that $\alpha < \kappa$ and for every $\beta < \alpha$ we have defined
$U_\beta \in \calU$ and  $V_\beta \in \tau^*(X)$ with $V_\beta \subset U_\beta$
with the additional property that $$|\{U \in \calU : U \cap V_\beta \ne \emptyset\}| < \kappa\,.$$
The regularity of $\kappa$ then implies that we can choose $U_\alpha \in \calU$ that is
disjoint from $V_\beta$ for all $\beta < \alpha$. But no point of $U_\alpha$ is a
complete accumulation point of $\calU$ by our assumption, hence we may clearly
find a non-empty open $V_\alpha \subset U_\alpha$ for which
$|\{U \in \calU : U \cap V_\alpha \ne \emptyset\}| < \kappa$. Clearly, $V_\alpha$ is
disjoint from $V_\beta$ and hence $U_\alpha \ne U_\beta$ for all $\beta < \alpha$.
This shows that our inductive procedure can be completed.
\end{proof}

The proofs of the following two propositions are straightforward
and hence are left to the reader.

\begin{pro}\label{pro:wll}
Suppose that $X$ is a weakly linearly Lindel\"of space. Then

\pritem{(a)}  any locally countable family $\calU\C \tau^*(X)$ is
countable;

\pritem{(b)} collectionwise normality of $X$ implies $ext(X)\leq\o$;

\pritem{(c)} every  continuous  image  of  $X$
is weakly linearly Lindel\"of;

\pritem{(d)} every  regular closed subspace of $X$
is weakly linearly Lindel\"of;

\pritem{(e)} if $X$ is a dense subspace in a space $Y$, then $Y$  is  weakly
linearly Lindel\"of;

\pritem{(f)} if $K$ is compact and Hausdorff, then $X\times K$ is
weakly linearly Lindel\"of;

\pritem{(g)}  every  perfect  irreducible   preimage  of  a  weakly
linearly Lindel\"of space is weakly linearly Lindel\"of.
\end{pro}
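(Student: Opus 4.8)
The plan is to verify the seven items one by one, the recurring device being Proposition~\ref{pro:disj}: to prove a space weakly linearly Lindel\"of it is enough to furnish a complete accumulation point for every \emph{disjoint} family of regular uncountable size, and disjointness is precisely what keeps a family transported along a map at full cardinality $\kappa$. For (a), if a locally countable $\calU\C\tau^*(X)$ were uncountable I would pick $\calU'\C\calU$ with $|\calU'|=\o_1$ and feed it to the definition; its complete accumulation point has a neighborhood meeting only countably many members of $\calU$, contradicting that every neighborhood meets $\o_1$-many members of $\calU'$. Item (b) then follows from (a): an uncountable closed discrete set yields, by collectionwise normality, a discrete---hence locally countable---family of pairwise disjoint nonempty open sets of the same size, which (a) forbids. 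For (c), a continuous surjection $f\colon X\to Y$ makes $U\mapsto f^{-1}(U)$ injective on $\tau^*(Y)$, so a family lifts without loss of cardinality; a complete accumulation point $x$ of the lift gives the complete accumulation point $f(x)$ of the original family, since $f^{-1}(V)\cap f^{-1}(U)\ne\e$ exactly when $V\cap U\ne\e$.

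Items (d) and (e) run along the same ``transport and pull back'' lines, now using the disjoint reduction to secure cardinality. For a regular closed $F=\CL U$ I would replace each member of a disjoint family on $F$ by a nonempty open subset of $X$ lying inside it and inside $U$; for a dense $X\C Y$ I would take the trace on $X$ of each member of a disjoint family on $Y$. In both cases disjointness keeps the new family of size $\kappa$, weak linear Lindel\"ofness of $X$ provides a complete accumulation point, and a short check shows it lies in $F$ (respectively is the sought point of $Y$) and accumulates the original family.

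The genuinely nontrivial cases are (f) and (g), and I expect (f) to be the main obstacle, the difficulty being to pin down \emph{both} coordinates of the accumulation point at once. Starting from a disjoint family in $X\times K$, I shrink each member to a box $G_\alpha\times H_\alpha$. The $X$-coordinate comes first: either the $G_\alpha$ assume $\kappa$ distinct values, so weak linear Lindel\"ofness of $X$ gives a point $x$ whose every neighborhood meets $\kappa$-many of them, or one open set repeats $\kappa$ times and any of its points works. Setting $T_A=\{\alpha:A\cap G_\alpha\ne\e\}$ for $A\in\tau(x,X)$, each $T_A$ has size $\kappa$. The $K$-coordinate is then extracted by compactness and counting: were no $k\in K$ suitable, each $k$ would carry a neighborhood $B_k$ and some $A_k\in\tau(x,X)$ with $|\{\alpha\in T_{A_k}:H_\alpha\cap B_k\ne\e\}|<\kappa$; a finite subcover $B_{k_1},\dots,B_{k_n}$ of $K$ together with $A^*=\bigcap_{i}A_{k_i}$ would then exhibit $T_{A^*}$, of size $\kappa$, as a finite union of sets of size $<\kappa$, against the regularity of $\kappa$. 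The point $(x,k)$ so obtained accumulates all the boxes and hence the original family.

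Finally, (g) mirrors (f) with the compact fiber in the role of $K$. For a perfect irreducible $f\colon X\to Y$ and a disjoint family $\calU$ on $X$, the sets $V_U=Y\mi f(X\mi U)$ are nonempty open, satisfy $f^{-1}(V_U)\C U$, and are pairwise disjoint, so $\{V_U\}$ has size $\kappa$ and $Y$ yields a complete accumulation point $y$. If no point of the compact fiber $f^{-1}(y)$ accumulated $\calU$, then a finite subcover of the fiber by neighborhoods each meeting $<\kappa$ members of $\calU$ would, by closedness of $f$, give an open $V\ni y$ with $f^{-1}(V)$ meeting fewer than $\kappa$ members of $\calU$; but $V$ meets $\kappa$-many $V_U$ and $f^{-1}(V_U)\C U$, a contradiction. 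Thus some point of the fiber is the desired complete accumulation point.
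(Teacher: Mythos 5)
Your proofs are correct, and they use exactly the machinery the paper intends: the paper itself gives no argument for this proposition (it is explicitly ``left to the reader''), but it places Proposition~\ref{pro:disj} immediately before it precisely so that weak linear Lindel\"ofness can be tested on \emph{disjoint} families, which is the reduction you invoke throughout. Your handling of the only non-routine items is sound as well: in (f), shrinking to boxes, settling the $X$-coordinate by the regularity dichotomy (either $\kappa$ distinct sets $G_\alpha$ or one repeated $\kappa$ times), and then extracting the $K$-coordinate by a finite subcover plus a counting argument; and in (g), passing to the nonempty open sets $V_U=Y\setminus f(X\setminus U)$ (nonempty by irreducibility, pairwise disjoint since $f^{-1}(V_U)\subset U$) and then locating the accumulation point inside the compact fiber using closedness of $f$.
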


\begin{pro}
If  $X$  is  a  space such that $X=\bigcup_{n\in\o}X_n$ and every
$X_n$ is weakly linearly Lindel\"of,  then so is $X$.
\end{pro}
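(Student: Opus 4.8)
The plan is to reduce, via a double pigeonhole argument, the search for a complete accumulation point of an arbitrary large family in $X$ to finding one inside a single piece $X_m$, where we may invoke \emph{its} weak linear Lindel\"ofness. So suppose $\calU \C \tau^*(X)$ with $\kappa = |\calU|$ an uncountable regular cardinal; we must produce a complete accumulation point. Since the $X_n$ cover $X$, each non-empty $U \in \calU$ meets some $X_n$; fix $n(U) \in \o$ with $U \cap X_{n(U)} \ne \e$ and set $\calU_n = \{U \in \calU : n(U) = n\}$. As $\kappa$ is regular and uncountable, a union of countably many sets of size $< \kappa$ has size $< \kappa$, so some $\calU_m$ satisfies $|\calU_m| = \kappa$. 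Note that Proposition \ref{pro:disj} is not needed here; we argue directly with the given family.

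Next I would pass to traces on $X_m$. Put $\mathcal{W} = \{U \cap X_m : U \in \calU_m\} \C \tau^*(X_m)$ and consider the fibers of the map $U \mapsto U \cap X_m$. Regularity of $\kappa$ gives a dichotomy: either some single $W \in \mathcal{W}$ is the trace of $\kappa$-many members of $\calU_m$, or all fibers have size $< \kappa$, in which case $\calU_m$ being the union of these fibers forces $|\mathcal{W}| = \kappa$. In the first case the argument terminates at once: pick any $x \in W$; since $W = U \cap X_m \C U$ for each of the $\kappa$-many $U$ with trace $W$, the point $x$ lies in $\kappa$-many members of $\calU$, so every $V \in \tau(x,X)$ contains $x \in U$ and hence meets all of them, showing $x$ is a complete accumulation point of $\calU$.

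In the main case, $\mathcal{W}$ is a family of $\kappa$ distinct non-empty open subsets of $X_m$, so weak linear Lindel\"ofness of $X_m$ yields a point $x \in X_m$ such that every $X_m$-neighborhood of $x$ meets $\kappa$-many elements of $\mathcal{W}$. I claim $x$ works for $\calU$ in $X$. Indeed, given $V \in \tau(x,X)$, the set $V \cap X_m$ is a neighborhood of $x$ in $X_m$, hence meets $\kappa$-many $W' \in \mathcal{W}$; choosing for each such $W'$ a single $U' \in \calU_m$ with $U' \cap X_m = W'$, distinctness of the $W'$ forces distinctness of the $U'$, and $V \cap U' \supseteq V \cap X_m \cap W' \ne \e$. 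Thus $V$ meets $\kappa$-many members of $\calU$, as required.

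The only genuine subtlety is the possible collapse of distinct open sets of $X$ to equal traces on $X_m$, which is precisely what the fiber dichotomy in the second step disposes of: the degenerate branch produces the accumulation point for free, while in the non-degenerate branch the traces are honestly $\kappa$-many so the definition of weak linear Lindel\"ofness of $X_m$ applies directly. Everything else is the standard cardinal bookkeeping that rests on $\kappa$ being uncountable and regular, so I expect no real obstacle beyond keeping the two pigeonhole steps straight.
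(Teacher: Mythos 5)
Your proof is correct. Note that the paper itself offers no argument here: the authors declare this proposition (together with the preceding one) ``straightforward'' and leave it to the reader, so there is no official proof to compare against. Your write-up is a complete and careful version of what is presumably the intended argument: pigeonhole over the countably many pieces using regularity of $\kappa$, then transfer the problem to traces on a single piece $X_m$. The one genuine subtlety --- that distinct members of $\calU_m$ may have identical traces on $X_m$, so $\{U\cap X_m: U\in\calU_m\}$ could a priori have size $<\kappa$ --- is real, and your fiber dichotomy disposes of it correctly (the degenerate branch even gives the accumulation point outright, since a point lying in $\kappa$-many members of $\calU$ is trivially a complete accumulation point). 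It is worth observing, though, that the paper has already supplied a tool that makes this subtlety vanish: by Proposition \ref{pro:disj}, if some family of regular uncountable size $\kappa$ had no complete accumulation point, one could pass to a \emph{disjoint} such family, and for a disjoint family the nonempty traces on $X_m$ are pairwise disjoint, hence automatically $\kappa$-many distinct sets. You explicitly chose not to use that proposition; the cost is the extra dichotomy, the benefit is a direct, self-contained proof rather than one by contradiction. Either route is fine.
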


\begin{exa} \rm
Under CH,  Michael  constructed  in  \cite{Mi}  an  example  of a
regular Lindel\"of space $X$ such that $X\times X$ is paracompact
but not Lindel\"of.  As a consequence, there  exists  a  discrete
uncountable  family  of  non-empty  open  subsets in $X\times X$.
Applying Proposition \ref{pro:wll} (b), we  conclude that $X\times X$
is not weakly linearly Lindel\"of. This shows that {\em compact}
cannot be replaces by {\em Lindel\"of} in Proposition \ref{pro:wll} (f).
\end{exa}

\begin{thm}\label{tm:wlL}
(a)   Every   linearly   Lindel\"of   space  is  weakly  linearly
Lindel\"of;

\noindent(b)
every almost discretely Lindel\"of space is weakly  linearly
Lindel\"of;

\noindent (c)
every weakly Lindel\"of space is weakly  linearly
Lindel\"of.
\end{thm}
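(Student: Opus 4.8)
The plan is to prove all three implications by contradiction, using Proposition~\ref{pro:disj} as the common first step. Suppose $X$ fails to be weakly linearly Lindel\"of, so there is a family $\calU\C\tau^*(X)$ with $\kappa=|\calU|$ regular uncountable that has no complete accumulation point. By Proposition~\ref{pro:disj} I may assume $\calU=\{U_\alpha:\alpha<\kappa\}$ is \emph{disjoint}. For parts (a) and (b) the decisive move is to pick a point $x_\alpha\in U_\alpha$ for each $\alpha<\kappa$; since the $U_\alpha$ are pairwise disjoint, the $x_\alpha$ are distinct, so $A=\{x_\alpha:\alpha<\kappa\}$ has cardinality $\kappa$, and moreover $A$ is discrete because $U_\alpha$ is a neighbourhood of $x_\alpha$ missing every other $x_\beta$. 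The link between the set $A$ and the family $\calU$ is the following observation: if some $x\in X$ were a complete accumulation point of $A$, then every $V\in\tau(x,X)$ would contain $\kappa$-many of the points $x_\alpha$ and would therefore meet $\kappa$-many of the $U_\alpha$, making $x$ a complete accumulation point of $\calU$, contrary to assumption. Hence \emph{$A$ is a subset of size $\kappa$ with no complete accumulation point in $X$}.

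For (a) I would invoke the classical fact that a space is linearly Lindel\"of if and only if each of its subsets of regular uncountable cardinality has a complete accumulation point; the existence of $A$ then contradicts linear Lindel\"ofness. If one prefers to avoid citing this characterization, the needed direction is reproved by a short chain argument: choosing for each $x\in X$ an open $W_x\ni x$ with $|W_x\cap A|<\kappa$ and enumerating $A=\{a_\xi:\xi<\kappa\}$, the sets $O_\beta=\bigcup\{W_x:W_x\cap A\C\{a_\xi:\xi<\beta\}\}$ form an increasing open cover of $X$ (using regularity of $\kappa$), so linear Lindel\"ofness forces $O_\beta=X$ for some $\beta<\kappa$; but then $a_\beta\in O_\beta$ would lie in some $W_x$ with $W_x\cap A\C\{a_\xi:\xi<\beta\}$, which is absurd.

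For (b) I would exploit the discreteness of $A$. If $X$ is almost discretely Lindel\"of, then the discrete set $A$ is contained in some Lindel\"of subspace $L\C X$. In any Lindel\"of space a subset of regular uncountable cardinality $\kappa$ must have a complete accumulation point: otherwise each point has a neighbourhood meeting fewer than $\kappa$ of its elements, and a countable subcover of these neighbourhoods expresses the set as a countable union of pieces of size $<\kappa$, contradicting the regularity of $\kappa$. Applying this inside $L$ produces a complete accumulation point $x\in L$ of $A$ relative to $L$; since the $L$-neighbourhoods of $x$ are traces of neighbourhoods in $X$ and $A\C L$, the point $x$ is also a complete accumulation point of $A$ in $X$, contradicting the conclusion of the first paragraph. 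Thus $X$ is weakly linearly Lindel\"of.

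For (c) no auxiliary set is needed and disjointness is irrelevant. Assuming $X$ weakly Lindel\"of and $\calU$ without complete accumulation point, pick for each $x\in X$ a $W_x\in\tau(x,X)$ meeting fewer than $\kappa$ members of $\calU$. Weak Lindel\"ofness gives a countable subfamily $\{W_{x_n}:n\in\o\}$ of $\{W_x:x\in X\}$ whose union $W$ is dense in $X$. Every $U\in\calU$ is non-empty and open, hence meets $W$ and so meets some $W_{x_n}$; therefore $\calU=\bigcup_{n\in\o}\{U\in\calU:U\cap W_{x_n}\neq\e\}$ is a countable union of sets each of size $<\kappa$, which is impossible for regular uncountable $\kappa$. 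I expect the real content to lie in parts (a) and (b) rather than (c): the crux is guaranteeing that the representative set $A$ attains the full cardinality $\kappa$ — precisely what the disjointness from Proposition~\ref{pro:disj} secures — and then correctly transferring a complete accumulation point of $A$ back to $\calU$; part (c), by contrast, is a clean cardinality count.
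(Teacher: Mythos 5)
Your proof is correct and takes essentially the same route as the paper: reduce to a disjoint family via Proposition \ref{pro:disj}, pick one point from each member to obtain a discrete set of full size $\kappa$, get a complete accumulation point of that set from linear (resp.\ almost discrete) Lindel\"ofness, and transfer it back to the family; your part (c) is the paper's argument verbatim. The only difference is that you spell out the facts the paper dismisses as clear, namely that in linearly Lindel\"of and in Lindel\"of spaces every set of regular uncountable cardinality has a complete accumulation point.
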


\begin{proof}
(a) and (b) Assume that the space $X$ is linearly Lindel\"of
or almost discretely Lindel\"of. Clearly, then
every {\em discrete} subset of $X$ of uncountable regular cardinality has a
complete accumulation point.
Now take  any
disjoint  family  $\calU=\{U_\alpha:  \alpha<\kappa\}\C \tau^*(X)$
for  some  uncountable  regular  cardinal  $\kappa$ and for  every
$\alpha<\kappa$ pick a  point  $x_\alpha\in U_\alpha$. Then
the, clearly discrete, set  $\{x_\alpha:   \alpha<\kappa\}$  has   a
complete accumulation point $x$ by the above.
It  is  obvious then that $x$ is also a complete accumulation point of
the family $\calU$   and   therefore, by   Proposition
\ref{pro:disj}, the proof is completed.

(c) Suppose that $X$  is  a  weakly Lindel\"of space and $\calU\C
\tau^*(X)$ is a family  of  an  uncountable  regular  cardinality
$\kappa$  that  has no complete accumulation point. For every $x\in
X$ we can take  a  set   $V_x\in   \tau(x,X)$   such  that  the  family
$\calP_x=\{U\in \calU:  U\cap V_x\neq\e\}$ has  cardinality  less
than $\kappa$.  For the open cover $\{V_x: x\in X\}$ of the space
$X$  we can find a countable set $B\C X$ such that $\bigcup\{V_x:
x\in  B\}$  is  dense  in  $X$.   As  an  immediate  consequence,
$\bigcup\{\calP_x: x\in B\} =\calU$ which contradicts the
regularity of $\kappa$  because  $|\calP_x|<\kappa$ for all $x\in
B$.

Our proof actually shows that in a  weakly
Lindel\"of  space  $X$  every family  $\calU\C \tau^*(X)$ whose
cardinality  has  uncountable  cofinality admits  a
complete accumulation point.
\end{proof}

Now we turn to presenting our main result formulated in the title of
our paper. An earlier version of the result used the following lemma that, however,
was replaced by the use of part (b) of Proposition \ref{pro:wll}.
Still, we decided to keep it because we think it has some independent interest.

\begin{lem} \label{lem:extcn}
Assume that $X$ is  a  collectionwise  normal  space and $Y$ is a
dense  subspace   of   $X$.    Then
$ext(X)\leq ext(Y)$.
\end{lem}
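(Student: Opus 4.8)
The plan is to bound the cardinality of an arbitrary closed discrete subspace of $X$ by $ext(Y)$. So fix a closed discrete set $D\C X$; the goal is to produce a closed discrete subset of $Y$ of the same cardinality. Since $X$ is $T_1$ and $D$ is closed and discrete, the family $\{\{d\}:d\in D\}$ of singletons is a discrete family of closed sets. The first step is to apply collectionwise normality of $X$ to this family: by the standard definition (see \cite{En}), it yields a \emph{discrete} family $\{U_d:d\in D\}\C\tau(X)$ of open sets with $d\in U_d$ for every $d$. It is crucial here that the expanding family be discrete, and not merely pairwise disjoint.

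Next I would invoke the density of $Y$. Each $U_d$ is a non-empty open set, so I can pick a point $y_d\in U_d\cap Y$; since a discrete family is in particular pairwise disjoint, the assignment $d\mapsto y_d$ is injective and $A=\{y_d:d\in D\}$ has cardinality $|D|$. The heart of the argument is to check that $A$ is closed and discrete in $X$, and this follows directly from the discreteness of $\{U_d\}$. Given any $x\in X$, choose $W\in\tau(x,X)$ meeting at most one member of the family, say only $U_{d_0}$ (or none at all). Then $W$ can contain no point $y_d$ other than possibly $y_{d_0}$, so that $W\cap A\C\{y_{d_0}\}$. This shows at once that $A$ is discrete and, since $X$ is $T_1$, that any $x\in\CL A$ must satisfy $x\in\CL{\{y_{d_0}\}}=\{y_{d_0}\}\C A$. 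Hence $A$ is closed discrete in $X$.

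Finally, because $A\C Y$ and $A$ is closed in $X$, the set $A$ is closed and discrete in the subspace $Y$, whence $|D|=|A|\leq ext(Y)$. Taking the supremum over all closed discrete $D\C X$ gives $ext(X)\leq ext(Y)$, as required. I expect the only genuine subtlety to be the passage from disjoint to discrete open expansions afforded by collectionwise normality; once the discrete expansion is in hand, the verification that the selected points form a closed discrete set, and hence the whole estimate, is routine.
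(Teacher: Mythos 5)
Your proposal is correct and follows essentially the same argument as the paper: use collectionwise normality to expand the closed discrete set $D$ to a discrete open family $\{U_d: d\in D\}$, pick points of $Y$ in each $U_d$ by density, and observe that the chosen points form a closed discrete subspace of $Y$ of size $|D|$. The only difference is presentational --- the paper argues by contradiction with a set of size $ext(Y)^+$, while you bound $|D|\leq ext(Y)$ directly (and you spell out the verification that the selected set is closed discrete, which the paper leaves to the reader).
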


\begin{proof}
Suppose that $ext(Y)=\kappa$ and  $D\C X$ is a closed  discrete
subspace  such that $|D|=\kappa^+$.  Then there exists a discrete
family $\calU=\{U_d:  d\in  D\}\C  \tau(X)$  such that $d\in U_d$
for any $d\in D$. Pick a point  $x(d)\in  U_d\cap  Y$  for  every
$d\in  D$.   Then  $D'=\{x(d):   d\in  D\}$  is a closed discrete
subset of  $Y$  with  $|D'|=\kappa^+$  which  is a contradiction.
\end{proof}

The following example shows that we cannot replace the extent with
the Lindel\"of number in   Lemma \ref{lem:extcn}.

\begin{exa}\rm
If   $X=\{x\in   \amsD^{\o_1}:    |x^{-1}(1)|\leq\o\}$   is   the
$\Sigma$-product in $\amsD^{\o_1}$, then $X$ is a  collectionwise
normal  non-Lindel\"of  space  (see   Problem  102  of  the  book
\cite{Tkb3}) which has the dense $\sigma$-compact subspace  $S=\{x\in
\amsD^{\o_1}: |x^{-1}(1)|<\o\}$.

\end{exa}

\medskip

\begin{thm}\label{tm:main}
Every  monotonically normal and weakly linearly Lindel\"of space
is Lindel\"of.  \end{thm}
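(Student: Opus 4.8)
The plan is to combine a structural dichotomy for monotonically normal spaces with the countability of the extent and a purely combinatorial fact about stationary sets. First I would record the easy reduction: a monotonically normal space is (hereditarily) collectionwise normal, so applying part (b) of Proposition \ref{pro:wll} to our weakly linearly Lindel\"of space $X$ gives $ext(X)\le\omega$, i.e.\ every closed discrete subspace of $X$ is countable. This is the only point at which the weak-linear-Lindel\"of hypothesis is used through an already established statement.

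Next I would invoke the Balogh--Rudin structure theorem: a monotonically normal space is either paracompact or contains a closed subspace $C$ homeomorphic to a stationary subset $S$ of some regular uncountable cardinal $\kappa$ carrying its order topology. In the paracompact case the argument finishes quickly. Given any open cover, take a locally finite open refinement $\calV$ and choose one point in each of its members; the resulting set is locally finite, hence closed and discrete, so by $ext(X)\le\omega$ it is countable, and since each point lies in only finitely many members of $\calV$ we get $|\calV|\le\omega$. A countable open refinement yields a countable subcover, so $X$ is Lindel\"of.

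It remains to eliminate the stationary case, and here the genuine work lies. I would first show combinatorially that no unbounded $S\C\kappa$ (with $\kappa$ regular uncountable and $S$ in the subspace topology) is weakly linearly Lindel\"of. Fixing an increasing cofinal sequence $\langle \gamma_\xi:\xi<\kappa\rangle$ in $S$, the sets $O_\xi=(\gamma_\xi,\gamma_{\xi+1}]\cap S$ form a disjoint family of $\kappa$ nonempty open sets, and each $\gamma\in S$ has the neighborhood $(\delta,\gamma]\cap S$ meeting only the $O_\xi$ with $\gamma_\xi<\gamma$; by regularity of $\kappa$ these form an initial segment of size $<\kappa$, so the family has no complete accumulation point. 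Since a stationary $S$ is unbounded, this applies to $C\cong S$.

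The main obstacle is transporting this witnessing family from the closed subspace $C$ up to $X$, since weak linear Lindel\"ofness need not pass to arbitrary closed subspaces (Proposition \ref{pro:wll} guarantees it only for regular closed ones). I would resolve this by expanding $\{O_\xi:\xi<\kappa\}$, which is locally-$<\kappa$ in the closed set $C$, to a family $\{T_\xi:\xi<\kappa\}$ of open subsets of $X$ with $T_\xi\cap C=O_\xi$ that remains locally-$<\kappa$ in all of $X$; here I would lean on the hereditary collectionwise normality of $X$ to keep the tubes $T_\xi$ thin enough that every point of $X$ retains a neighborhood meeting fewer than $\kappa$ of them, using that $C$ is closed to handle points off $C$ and the bounded-trace estimate above to handle points on $C$. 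The hard part is precisely establishing such an expansion lemma in the monotonically normal setting; once it is in place, $\{T_\xi\}$ has no complete accumulation point in $X$, contradicting weak linear Lindel\"ofness. This contradiction rules out the stationary alternative, so $X$ is paracompact, and by the second paragraph therefore Lindel\"of.
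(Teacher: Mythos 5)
Your skeleton coincides with the paper's up to the decisive step: the reduction $ext(X)\le\omega$ via Proposition \ref{pro:wll}(b), the observation that paracompactness plus countable extent yields Lindel\"ofness, the appeal to Balogh--Rudin, and the computation showing that a stationary $S\subset\kappa$ carries a disjoint family of $\kappa$ non-empty open sets with no complete accumulation point are all correct and are exactly what the paper does. The genuine gap is the step you yourself call ``the hard part'': the expansion lemma producing open sets $T_\xi\subset X$ with $T_\xi\cap C=O_\xi$ whose family remains locally-$<\kappa$ at every point of $X$. You give no proof of it, and the tool you propose, hereditary collectionwise normality, is not suited to the task: collectionwise normality expands \emph{discrete} families of closed sets, whereas $\{O_\xi:\xi<\kappa\}$ is merely disjoint --- it accumulates along $C$ --- and, more importantly, the danger sits at points of $X\setminus C$, where nothing of collectionwise-normality type prevents a single point from lying in $\kappa$-many of the tubes $T_\xi$ and thereby becoming a complete accumulation point of the expanded family even though no point of $C$ is one. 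A further warning sign: the theorem itself fails consistently for hereditarily collectionwise normal spaces (the paper's CH example of a countably compact HFD space is hereditarily collectionwise normal, weakly linearly Lindel\"of and non-Lindel\"of), so any plan in which, past Balogh--Rudin, monotone normality is exploited only through collectionwise normality should be viewed with suspicion.

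The paper closes exactly this hole by using the monotone normality operator $O$ itself, and by working with points rather than with the intervals $O_\xi$. It takes a discrete $D\subset F$ dense in the closed copy $F$, with $|D|=|F|=\kappa$; since $F$ is closed and $D$ has no complete accumulation point in $F$, every $x\in X$ has $W_x\in\tau(x,X)$ with $|D\cap W_x|<\kappa$. Hereditary collectionwise normality is invoked only where it genuinely applies, namely to $D$, which is closed discrete in the open subspace $X\setminus(F\setminus D)$: this produces disjoint $V_d\in\tau(d,X)$ with $V_d\cap(F\setminus D)=\emptyset$. The witnessing family is then $\calU=\{O(d,V_d):d\in D\}$, and the operator's defining property --- $O(x,U)\cap O(y,V)\ne\emptyset$ implies $x\in V$ or $y\in U$ --- is precisely what transports the point-count bound into a bound on the open family: if $x\notin\bigcup_{d\in D}V_d$, then $O(x,W_x)\cap O(d,V_d)\ne\emptyset$ forces $d\in W_x$, so at most $|D\cap W_x|<\kappa$ members are met, while if $x\in V_d$ then $V_d$ itself meets only the single member $O(d,V_d)$. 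This operator trick (or some equivalent of it) is what your expansion lemma would have to contain; as written, your argument is incomplete at its crucial point.
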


\begin{proof}
Let $X$ be a  monotonically  normal and weakly  linearly  Lindel\"of
space.    Then $X$  is  collectionwise  normal, hence we  can  apply part (b) of
Proposition \ref{pro:wll}  to  conclude  that  $ext(X)\leq  \o$. But
every paracompact space of countable  extent  is  Lindel\"of, so it
suffices to prove that $X$ is paracompact.

If  $X$ is not paracompact, then we can apply the celebrated characterization theorem of Balogh
and Rudin (see  \cite[Theorem  I]{BR})  to  conclude  that  there
exists  a  closed set $F\C X$ homeomorphic to a stationary subset
of some uncountable regular cardinal $\kappa$.  The set $F$ being scattered,
we can choose a discrete subspace $D\C F$ such that $\CL  D=F$.
Note that we have $|D| = |F| = \kappa$.

Clearly, the  set  $D$  has no complete accumulation
point in $F$ and hence in $X$.
Thus, for any point $x\in X$ we may pick an open neighborhood $W_x\in \tau(x,X)$ such that
$|D \cap W_x|<\kappa$.  It follows from the hereditary  collectionwise
normality  of  $X$  that we can find a set $V_d\in \tau(d,X)$ for
every $d\in D$ such that $V_d\cap (F\mi D)=\e$ and
the family $\{V_d: d\in D\}$ is disjoint.

Fix a monotone normality operator $O$ for $X$ and consider the
family $\calU=\{O(d,V_d):  d\in  D\}\C  \tau^*(X)$. Then $\calU$
is disjoint and its cardinality is equal to $\kappa$. If $x\in  V_d$
for  some  $d\in  D$,  then  $V_d$ trivially witnesses  that  $x$ is not a
complete  accumulation   point   of   $\calU$.    If, on the other hand,  $x\in  X\mi
\bigcup\{V_d: d\in D\}$, then   $O(x,W_x)  \cap
O(d,V_d)\neq\e$ must imply that  $d\in  W_x$, consequently we have

\medskip
\centerline{$\{d\in D: O(x,W_x)
\cap  O(d,V_d)   \neq\e\}   \C   D\cap W_x$.}

\medskip\noindent
But then, as  $|D\cap W_x|<\kappa$, we conclude that $x$ is not a
complete  accumulation  point of $\calU$. As a consequence, the  family  $\calU$  has  no
complete  accumulation  point  in  $X$, contradicting
the weak linear Lindel\"ofness of the space $X$
and thus completing the proof. \end{proof}

From Theorems \ref{tm:wlL} and \ref{tm:main} we immediately get the following.

\begin{cor} \label{cor:densel}
Suppose that $X$ is a  monotonically normal space possessing any one
of the following properties: weak
Lindel\"ofness,   linear    Lindel\"ofness, or almost    discrete
Lindel\"ofness. Then $X$ is Lindel\"of. \end{cor}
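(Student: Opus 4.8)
The plan is to observe that this corollary is an immediate consequence of the two preceding results, Theorem \ref{tm:wlL} and Theorem \ref{tm:main}, so essentially no new argument is required: all three listed hypotheses funnel into the single property of weak linear Lindel\"ofness, at which point monotone normality finishes the job.

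First I would fix an arbitrary monotonically normal space $X$ satisfying at least one of the three properties. The key reduction is to show that $X$ is weakly linearly Lindel\"of in each of the three cases. This is precisely what Theorem \ref{tm:wlL} supplies: part (c) handles weak Lindel\"ofness, part (a) handles linear Lindel\"ofness, and part (b) handles almost discrete Lindel\"ofness. Thus, no matter which of the three hypotheses we assume, $X$ turns out to be weakly linearly Lindel\"of.

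Having secured weak linear Lindel\"ofness, I would simply invoke Theorem \ref{tm:main}: a space that is both monotonically normal and weakly linearly Lindel\"of is Lindel\"of. Since $X$ is monotonically normal by hypothesis and weakly linearly Lindel\"of by the previous step, the conclusion $l(X)\leq\o$ follows at once.

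Honestly, there is no real obstacle in the corollary itself; the entire difficulty has already been absorbed into Theorem \ref{tm:main}, whose proof relies on the Balogh--Rudin characterization of paracompactness (producing a closed copy of a stationary subset of a regular uncountable cardinal) together with the monotone normality operator and part (b) of Proposition \ref{pro:wll}. The only point worth double-checking is that the three implications of Theorem \ref{tm:wlL} genuinely exhaust all three listed properties---which they do, one part per property---so that the hypotheses of Theorem \ref{tm:main} are met uniformly in every case.
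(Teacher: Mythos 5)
Your proposal is correct and matches the paper's own proof exactly: the paper derives this corollary immediately from Theorem \ref{tm:wlL} (which covers all three hypotheses) combined with Theorem \ref{tm:main}. Nothing further is needed.
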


Moreover, from part (e) of Proposition \ref{pro:wll} we may obtain the following
statement that is formally stronger than Theorem \ref{tm:main}.

\begin{cor}
If a monotonically normal  space  has a dense weakly linearly Lindel\"of
subspace, then it is Lindel\"of. \end{cor}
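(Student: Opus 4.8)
The plan is to reduce the statement to Theorem \ref{tm:main} by upgrading the ambient space to a weakly linearly Lindel\"of one. Let $X$ be monotonically normal and suppose $Y \subset X$ is a dense weakly linearly Lindel\"of subspace. The single substantive input will be part (e) of Proposition \ref{pro:wll}, which says precisely that weak linear Lindel\"ofness passes upward from a dense subspace to the whole space. So first I would apply Proposition \ref{pro:wll}(e) with $Y$ in the role of the weakly linearly Lindel\"of space: since $Y$ is dense in $X$, the proposition yields that $X$ itself is weakly linearly Lindel\"of.

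The intuition behind that propagation step is that a complete accumulation point of a family $\calU \subset \tau^*(X)$ can be detected using only the traces of its members on the dense set $Y$. Replacing each member of $\calU$ by its intersection with $Y$ produces a family in $\tau^*(Y)$ of the same uncountable regular cardinality $\kappa$, and each such trace meets a given neighborhood in $Y$ in $\kappa$-many members exactly when the original set does so in $X$; hence a complete accumulation point supplied by the weak linear Lindel\"ofness of $Y$ remains one for $\calU$ in $X$. Once $X$ is known to be weakly linearly Lindel\"of and is monotonically normal by hypothesis, I would invoke Theorem \ref{tm:main} directly to conclude that $X$ is Lindel\"of, which is the desired assertion.

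I expect there to be no genuine obstacle here: the entire difficulty is already absorbed into Theorem \ref{tm:main} and into the verification of Proposition \ref{pro:wll}(e), both of which may be assumed. The corollary is therefore formally stronger than Theorem \ref{tm:main} merely because density can be exploited, and its proof is nothing more than the concatenation of those two earlier results.
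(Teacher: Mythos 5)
Your proposal is correct and is exactly the paper's proof: the authors derive this corollary by combining part (e) of Proposition \ref{pro:wll} (weak linear Lindel\"ofness passes from a dense subspace to the whole space) with Theorem \ref{tm:main}, just as you do. Your sketch of why Proposition \ref{pro:wll}(e) holds (restricting open sets to the dense subspace and transferring complete accumulation points back) is also sound, matching what the paper leaves to the reader.
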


It is well-known that spaces of countable cellularity, i.e.
spaces $X$ with $c(X)=\o$, are examples of weakly Lindel\"of spaces
that are not necessarily Lindel\"of.  However, we have $c(X)=hl(X)$
for  every  monotonically normal space $X$ (see e.g., \cite[Theorem
A]{Ga}), consequently Theorem \ref{tm:main} says nothing new for
monotonically normal spaces of countable cellularity.    However,   Corollary
\ref{cor:densel} seems to give us new information even for linearly ordered spaces:

\begin{cor}
Every weakly Lindel\"of   GO space  is Lindel\"of.
\end{cor}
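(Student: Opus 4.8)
The plan is to reduce the statement to Corollary \ref{cor:densel}, which already asserts that a monotonically normal weakly Lindel\"of space is Lindel\"of. Since that corollary does all the topological work, the only thing I need to supply is the missing link: \emph{every GO space is monotonically normal}. Once this is in hand, the corollary follows in a single line, because by definition a GO space is (homeomorphic to) a subspace of a linearly ordered space, and weak Lindel\"ofness is one of the three hypotheses explicitly covered by Corollary \ref{cor:densel}.

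So the first step is to recall the classical fact that every linearly ordered topological space is monotonically normal. This is a standard result of Heath, Lutzer and Zenor, and the monotone normality operator can be written down explicitly from the order: for a point $x$ lying in an open set $U$ one takes $O(x,U)$ to be a suitable order-convex neighborhood of $x$ contained in $U$ (a bounded open interval, or an appropriate ray if $x$ is an endpoint), and one checks directly that the operator satisfies the condition in the definition of monotone normality given in Section 2. The second step is the observation that monotone normality is hereditary: if $O$ is a monotone normality operator on a space and $Y$ is any subspace, then restricting and intersecting with $Y$ produces a monotone normality operator on $Y$. Combining these two facts, any subspace of a linearly ordered space---that is, any GO space---is monotonically normal.

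With monotone normality of the GO space $X$ established, the conclusion is immediate: if $X$ is a weakly Lindel\"of GO space, then $X$ is monotonically normal and weakly Lindel\"of, so Corollary \ref{cor:densel} (in its weak Lindel\"ofness case) applies and yields that $X$ is Lindel\"of. I do not expect any genuine obstacle here. The one ingredient that is not internal to the present paper is the monotone normality of linearly ordered spaces, but this is entirely classical and requires no new argument; all the substantive content has already been absorbed into Theorem \ref{tm:main} and hence into Corollary \ref{cor:densel}. If a fully self-contained treatment were desired, the only added labor would be writing out the explicit order-theoretic operator $O$ and verifying its defining property, which is routine.
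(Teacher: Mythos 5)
Your proposal is correct and follows exactly the route the paper intends: the paper states this corollary as an immediate consequence of Corollary \ref{cor:densel}, relying implicitly on the classical Heath--Lutzer--Zenor fact that linearly ordered spaces are monotonically normal and that monotone normality is hereditary (so GO spaces are monotonically normal). Your write-up simply makes that implicit classical ingredient explicit, which is exactly the right amount of detail.
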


\begin{cor}
If  a  GO space  $X$  has  a dense linearly Lindel\"of
subspace, then $X$ is Lindel\"of.
\end{cor}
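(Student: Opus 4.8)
The plan is to obtain this corollary as an immediate consequence of the machinery already developed above, the only ingredient external to the paper being the classical fact that every GO space is monotonically normal (a theorem of Heath, Lutzer and Zenor). Granting this, $X$ is monotonically normal, and the whole problem reduces to verifying that $X$ is weakly linearly Lindel\"of.

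First I would let $Y$ denote the given dense linearly Lindel\"of subspace of $X$. By Theorem \ref{tm:wlL}(a), every linearly Lindel\"of space is weakly linearly Lindel\"of, so $Y$ itself is weakly linearly Lindel\"of. Next, since $Y$ is weakly linearly Lindel\"of and dense in $X$, part (e) of Proposition \ref{pro:wll} yields that $X$ is weakly linearly Lindel\"of as well.

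Finally, $X$ is simultaneously monotonically normal and weakly linearly Lindel\"of, so Theorem \ref{tm:main} applies directly and gives that $X$ is Lindel\"of. Equivalently, one may appeal in a single step to the corollary immediately preceding this one, which asserts that any monotonically normal space with a dense weakly linearly Lindel\"of subspace is Lindel\"of; combined with the monotone normality of GO spaces and Theorem \ref{tm:wlL}(a), this is exactly the statement to be proved.

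Since each of these steps is routine, I do not anticipate a genuine obstacle. The one point worth flagging is the invocation of the monotone normality of GO spaces, as this is the sole fact not established within the paper itself; everything else is a direct chaining of Theorem \ref{tm:wlL}(a), Proposition \ref{pro:wll}(e), and Theorem \ref{tm:main}.
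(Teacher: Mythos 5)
Your proposal is correct and follows exactly the chain the paper intends: monotone normality of GO spaces (Heath--Lutzer--Zenor), Theorem \ref{tm:wlL}(a) to pass from linear Lindel\"ofness of the dense subspace to weak linear Lindel\"ofness, Proposition \ref{pro:wll}(e) to transfer this to $X$, and Theorem \ref{tm:main} (equivalently, the immediately preceding corollary on monotonically normal spaces with dense weakly linearly Lindel\"of subspaces) to conclude. The paper states this corollary without proof precisely because it is this routine chaining, so there is nothing to add.
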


\begin{exa}\rm
Any countably compact but non-compact S-space is an
example of  an
almost   discretely   Lindel\"of   space  that  is  not  linearly
Lindel\"of. Such examples are the HFD space constructed from CH in \cite{Juh}
that is hereditarily collectionwise normal and Ostaszewski's space constructed in \cite{Os}
from Jensen's  Axiom  $\diamondsuit$ that is even perfectly normal (and hence  first
countable).  While both of these examples have countable tightness, we  recall that
every discretely Lindel\"of space  of countable tightness must be
Lindel\"of by \cite[Corollary 3.5]{AB}. These examples also show that monotone
normality cannot be weakened essentially in Theorem \ref{tm:main}.
\end{exa}

\smallskip

We do not know whether an  example of  an
almost   discretely   Lindel\"of   but  not  linearly
Lindel\"of space can be given in ZFC.

\medskip

To conclude our paper, we now present a couple of
results that involve co-diagonals and the
almost discrete Lindel\"ofness property.

\begin{thm}\label{thm:smalld}
Suppose  that  $X$  is  a  regular  space  and its co-diagonal
$\Delta^c_X$ has the following properties:

\smallskip\noindent
(a) The closure of any  countable discrete subset of $\Delta^c_X$
in $\Delta^c_X$ is Lindel\"of;

\smallskip\noindent
(b) $\Delta^c_X$  is
almost discretely   Lindel\"of.

\smallskip\noindent
Then  $X$  has  small  diagonal.
\end{thm}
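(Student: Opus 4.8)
The plan is to argue by contradiction, reducing the failure of a small diagonal to a locally countable configuration of points that hypotheses (a) and (b) jointly cannot sustain.

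First I would reformulate the negation. Suppose $X$ has no small diagonal, so there is an uncountable $A\subseteq\Delta^c_X$ such that $\overline{B}\cap\Delta_X\ne\e$ (closure taken in $X\times X$) for every uncountable $B\subseteq A$. I claim that $A$ must \emph{converge to the diagonal}, meaning that every open $U\supseteq\Delta_X$ contains all but countably many points of $A$. Indeed, otherwise some open $U\supseteq\Delta_X$ omits uncountably many points of $A$, and this uncountable set $A\setminus U$ lies in the closed set $(X\times X)\setminus U$, which is disjoint from $\Delta_X$, contradicting the choice of $A$. Passing to a subset, I may assume $|A|=\o_1$. The decisive consequence is local countability: since $X$ is regular, any $(u,v)\in\Delta^c_X$ admits open sets $U\ni u$ and $V\ni v$ with $\overline{U}\cap\overline{V}=\e$, so $(X\times X)\setminus(\overline{U}\times\overline{V})$ is an open neighbourhood of $\Delta_X$ and hence omits only countably many points of $A$; therefore $U\times V$ is a neighbourhood of $(u,v)$ meeting $A$ in at most countably many points. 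Thus every point of $\Delta^c_X$ has a neighbourhood meeting $A$ in a countable set.

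Next I would split into two cases according to whether some countable set $E\subseteq A$ that is discrete in $\Delta^c_X$ has a closure (in $\Delta^c_X$) catching uncountably many points of $A$. If such an $E$ exists, then by hypothesis (a) the set $K=\overline{E}$ (closure in $\Delta^c_X$) is Lindel\"of; covering $K$ by the countable-trace neighbourhoods supplied by local countability and extracting a countable subcover exhibits $A\cap K$ as a union of countably many countable sets, contradicting $|A\cap K|=\o_1$. If no such $E$ exists, then every countable discrete subset of $A$ has a countable trace-of-closure on $A$, and I would build, by recursion on $\xi<\o_1$, a set $\{d_\xi:\xi<\o_1\}\subseteq A$ that is discrete in $\Delta^c_X$, together with open neighbourhoods $U_\xi\ni d_\xi$ satisfying $|U_\xi\cap A|\le\o$ and $U_\xi\cap\{d_\eta:\eta<\xi\}=\e$. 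At a countable stage $\xi$ both the closure of $\{d_\eta:\eta<\xi\}$ intersected with $A$ and the set $\bigcup_{\eta<\xi}(U_\eta\cap A)$ are countable, so a valid $d_\xi$ can be chosen in $A$ outside both; excluding the earlier $U_\eta$'s keeps every previously chosen point isolated, so discreteness is maintained. This produces an uncountable discrete subset $D\subseteq\Delta^c_X$. By hypothesis (b), $D$ is contained in a Lindel\"of $L\subseteq\Delta^c_X$, and the same local-countability covering argument (cover $L$, take a countable subcover) forces $D$ to be countable, a contradiction.

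The main obstacle is the recursion in the second case: guaranteeing that $\{d_\eta:\eta<\o_1\}$ stays genuinely discrete through all countable stages, since discreteness is not preserved under unions of chains in general. This is precisely where the case hypothesis (countable closure-traces) and local countability cooperate: at each countable stage only countably many points must be avoided, so the construction never stalls and each point retains an isolating neighbourhood. In effect, hypothesis (a) disposes of the ``separable-catching'' case while hypothesis (b) disposes of the ``spread-out'' case, which explains why both assumptions on $\Delta^c_X$ are required.
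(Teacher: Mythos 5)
Your proof is correct and follows essentially the same route as the paper's: your ``local countability'' of $A$ is exactly the paper's reduction $(*)$ (that $A$ has no complete accumulation point in $\Delta^c_X$, obtained from regularity in the same way), and your two cases correspond precisely to the paper's transfinite construction either halting at a countable stage (a countable discrete set whose closure traps uncountably much of $A$, handled by hypothesis (a)) or running for $\omega_1$ steps to produce an uncountable discrete subset of $A$ (handled by hypothesis (b)). The only cosmetic differences are that you announce the dichotomy up front instead of reading it off the stopping condition of the recursion, and that you re-derive by an explicit covering argument the standard fact that a set of size $\omega_1$ inside a Lindel\"of space has a complete accumulation point.
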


\begin{proof}
Consider any set $A\C \Delta^c_X$ with $|A|=\o_1$. We can
assume that

\smallskip
{
\pritem{$(*)$}
$A$ has no  complete accumulation points in $\Delta^c_X$.

}

\smallskip
\noindent
Insdeed, if $z \in \Delta^c_X$ is
a complete accumulation point point of $A$ in $\Delta^c_X$, then
we may choose a set $U\in \tau(z,X\times
X)$ such that $\CL U \cap \Delta_X=\e$. (The closure is taken in $X \times X$.)
But then $B=\CL U\cap A \C A$
is uncountable and $\CL B\cap \Delta_X =\e$.

Take  any point $x_0\in A$ and a set $U_0\in \tau(x_0,X\times X)$
such that $\CL U_0 \cap \Delta_X=\e$ and $U_0\cap A$ is countable.
Proceeding by induction  assume  that  $\alpha<\o_1$  and we have
a  set  $D_\alpha=\{x_\beta:  \beta  <\alpha\}  \C \Delta^c_X$
and a family $\{U_\beta:   \beta<\alpha\}\C  \tau(X\times
X)$ with the following properties:

\smallskip
{
\pritem{(4)}   $x_\beta\in   U_\beta   \cap   A$,  the  set  $\CL
U_\beta\cap A$ is countable and  $\CL U_\beta \cap \Delta_X=\e$ for
every $\beta<\alpha$;

\pritem{(5)}  $x_\beta\notin   Q_\beta=   \bigcup\{   U_\gamma:
\gamma<\beta\} \cup \CL{\{x_\gamma: \gamma<  \beta\}}$  for  each
$\beta <\alpha$.

}

\smallskip
If the set  $Q_\alpha=  \bigcup\{ U_\gamma:  \gamma<\alpha\} \cup
\CL{\{x_\gamma:   \gamma<  \alpha\}}$  covers   $A$,   then   the
induction procedure stops.   If  not,  then  we  pick a point
$x_\alpha\in  A\mi  Q_\alpha$  and  choose   a   set   $U_\alpha\in
\tau(x_\alpha,   X\times   X)$   such  that  $\CL  U_\alpha  \cap
\Delta_X=\e$ and $\CL U_\alpha  \cap  A$  is countable. It is clear
that  conditions  (4)  and  (5)  are   satisfied   for   all
$\beta\leq\alpha$.

Let us observe  first  that  every  set  $D_\alpha$ constructed above is
discrete because for any ordinal $\beta<\alpha$ we have
$$U_\beta  \cap  \big( X\times X\,\mi \CL{\{x_\gamma:
\gamma<\beta\}}\big) \cap D_\alpha  = \{x_\beta\}\,.$$

Now, assume  first
that  our  inductive  procedure ended at some step $\alpha<\o_1$,
i.e. $A \subset Q_\alpha$. Then $A \cap \bigcup\{ U_\gamma:  \gamma<\alpha\}$
is countable, hence
this implies that $A \cap \CL  D_\alpha$ is uncountable.  But by condition (a)
$\CL D_\alpha \cap \Delta^c_X$ is Lindel\"of, hence the  set  $A$  must
have   a  complete  accumulation  point  in  $\CL  D_\alpha  \cap
\Delta^c_X$,  contradicting  $(*)$.

If on the other hand our  procedure  lasts  $\o_1$-many  steps,   then   we   have
constructed  an uncountable  discrete  set  $D=\{x_\alpha:  \alpha<\o_1\}\C A$.
Then, by condition (b),   there  exists  a  Lindel\"of  set  $L\C
\Delta^c_X$ such that $D\C L$ and
so the uncountable set $D\C A$ must have a  complete
accumulation   point   in  $L$  and  hence in $\Delta^c_X$. But this again contradicts
$(*)$. \end{proof}

\begin{cor}\label{cor:smalld}
If $X$ is  a  regular  space  and its co-diagonal  $\Delta^c_X$ is
discretely Lindel\"of, then $X$ has  small diagonal. \end{cor}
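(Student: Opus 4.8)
The plan is to obtain this as an immediate consequence of Theorem \ref{thm:smalld}: I would verify that discrete Lindel\"ofness of $\Delta^c_X$ supplies both hypotheses (a) and (b) of that theorem, and then simply invoke it.

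First I would check condition (a). A countable discrete subset of $\Delta^c_X$ is, in particular, a discrete subset, so its closure in $\Delta^c_X$ is Lindel\"of by the assumed discrete Lindel\"ofness of $\Delta^c_X$; thus (a) holds automatically (indeed, (a) is just the restriction of the hypothesis to the countable case). Next, for condition (b), I would recall that every discretely Lindel\"of space is almost discretely Lindel\"of --- this is one of the basic properties listed right after the definition of almost discrete Lindel\"ofness. Hence $\Delta^c_X$ is almost discretely Lindel\"of, which is precisely condition (b).

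Having verified both (a) and (b), Theorem \ref{thm:smalld} yields that $X$ has small diagonal, completing the argument. I do not expect any genuine obstacle here: all of the substantive work --- the transfinite construction of a discrete set $D_\alpha$ together with the neighborhoods $U_\beta$ whose closures miss $\Delta_X$, and the case analysis producing an uncountable subset of $A$ with closure disjoint from $\Delta_X$ --- is already carried out in the proof of Theorem \ref{thm:smalld}. This corollary is only the observation that the single hypothesis of discrete Lindel\"ofness is strong enough to imply simultaneously the two (formally weaker) conditions that the theorem actually requires.
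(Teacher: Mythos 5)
Your proposal is correct and matches the paper's intent exactly: the paper states this corollary with no written proof precisely because, as you observe, discrete Lindel\"ofness of $\Delta^c_X$ immediately gives condition (a) (the countable case of the hypothesis) and condition (b) (via the fact that discretely Lindel\"of implies almost discretely Lindel\"of), so Theorem \ref{thm:smalld} applies. Nothing further is needed.
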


\begin{cor}
Suppose that  $X$ is an $\o$-monolithic regular space such that
$\Delta^c_X$ is almost discretely Lindel\"of. Then
$X$ has  small diagonal. \end{cor}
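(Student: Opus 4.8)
The plan is to reduce everything to Theorem~\ref{thm:smalld}. Condition (b) of that theorem holds by hypothesis, so all I need to do is verify its condition (a): that the closure in $\Delta^c_X$ of an arbitrary countable discrete subset of $\Delta^c_X$ is Lindel\"of. The key observation will be that $\o$-monolithicity of $X$ is precisely what controls such closures, via their network weight.

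First I would fix a countable discrete set $D\C \Delta^c_X$ and let $A\C X$ be the union of the two projections of $D$; this $A$ is countable and $D\C A\times A$. Since $X$ is $\o$-monolithic, $nw(\CL A)\leq\o$, i.e. $\CL A$ is cosmic, and as network weight is finitely productive we get $nw(\CL A\times \CL A)\leq\o$ as well. Now $\CL A\times \CL A$ is closed in $X\times X$ and contains $D$, so the closure of $D$ in $X\times X$ sits inside the cosmic space $\CL A\times \CL A$; the closure of $D$ taken in $\Delta^c_X$ is obtained by intersecting with $\Delta^c_X$, and hence is a subspace of $\CL A\times \CL A$ too. Being a subspace of a cosmic space, it is cosmic, and therefore Lindel\"of since $hl\leq nw$. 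This is exactly condition (a), so Theorem~\ref{thm:smalld} applies and $X$ has small diagonal.

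The whole argument is short once Theorem~\ref{thm:smalld} is available; the only genuine content is recognizing that $\o$-monolithicity supplies condition (a). The point requiring a little care is that monolithicity is a hypothesis on $X$ whereas $D$ lives in the square $X\times X$: the passage to the single factor $A$ together with the finite productivity of network weight is what bridges this gap, and one should remember that the relevant closure is the one computed inside $\Delta^c_X$ rather than in $X\times X$ --- though since cosmicity passes to subspaces, this refinement costs nothing.
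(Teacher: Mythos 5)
Your proof is correct and is essentially the paper's own argument: the paper states this corollary as an immediate consequence of Theorem~\ref{thm:smalld}, leaving to the reader precisely the verification of condition (a) that you carry out (projecting $D$ to a countable $A\C X$, using $\o$-monolithicity plus finite productivity and monotonicity of network weight, and $hl\leq nw$). Nothing to correct.
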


The following statement gives a positive answer  to  Problem  4.6
from  the  paper  \cite{BT} and Question 5.8 from \cite{AJTW}.

\begin{cor}
If $X$ is a \LSS-space such that
$\Delta^c_X$  is  discretely  Lindel\"of,   then  $X$ has  a  countable
network.
\end{cor}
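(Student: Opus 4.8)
The plan is to derive the conclusion by combining the small-diagonal result of Corollary \ref{cor:smalld} with a known theorem (due, I believe, to Gruenhage) on \LSS-spaces that possess a small diagonal. Since the hypothesis only concerns $\Delta^c_X$ being discretely Lindel\"of, essentially all of the geometric work has already been done in the preceding corollaries, and what remains is to feed the output of Corollary \ref{cor:smalld} into the right external result.

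First I would note that, by definition, every \LSS-space is Tychonoff and in particular regular; hence Corollary \ref{cor:smalld} applies verbatim to $X$. As $\Delta^c_X$ is assumed to be discretely Lindel\"of, that corollary yields that $X$ has a small diagonal. This is the only place where the co-diagonal hypothesis enters the argument.

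It then remains to invoke the theorem asserting that every \LSS-space with a small diagonal has a countable network. Applying it to $X$ gives $nw(X)\leq\o$ at once, i.e. $X$ is cosmic, which is exactly the assertion to be proved.

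The main obstacle is concentrated entirely in this last step, since on its own a small diagonal is a rather weak constraint. The naive approach --- to show that each compact member of a compact cover witnessing the \LSS-property is metrizable, and then to assemble a countable network out of the countable network modulo that cover --- is blocked, because whether a compact space with a small diagonal must be metrizable is a well-known open problem of ZFC, so one cannot metrize the compact pieces individually. The argument one must use instead exploits the \emph{global} \LSS-structure of $X$ (roughly, that $X$ is the image of a separable metric space under a compact-valued upper semicontinuous surjection) and uses the small diagonal to collapse the network down to a countable one. Reproducing that argument is where the real difficulty lies, so in this paper I would simply cite it rather than redo it.
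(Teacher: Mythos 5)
Your use of Corollary \ref{cor:smalld} to get a small diagonal matches the paper, but your final step invokes a theorem that does not exist in ZFC. You cite ``every \LSS-space with a small diagonal has a countable network'' as a known result of Gruenhage; it is not. If it were a ZFC theorem, then applying it to a compact space $K$ with a small diagonal (every compact Hausdorff space is a \LSS-space, and a compact space with a countable network is metrizable) would immediately resolve the very open problem you yourself acknowledge --- whether compact spaces with small diagonals are metrizable. So no ``global $\Sigma$-structure'' argument can bypass the metrizability of compact pieces: the compact pieces are themselves instances of the alleged theorem. Gruenhage's actual result (\cite[Theorem 2.1]{Gr}), which is what the paper uses, carries the extra hypothesis that \emph{all compact subspaces of $X$ are metrizable}, in addition to the \LSS-property and the small diagonal.

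The missing idea is that the hypothesis on $\Delta^c_X$ must be used a second time --- contrary to your claim that Corollary \ref{cor:smalld} is the only place where it enters. For each compact $K\C X$, the set $\Delta^c_K=(K\times K)\mi\Delta_K$ is a closed subspace of $\Delta^c_X$ and is therefore itself discretely Lindel\"of; by Burke--Tkachuk (\cite[Proposition 3.3]{BT}) this forces $K$ to be metrizable. Note that this metrization does \emph{not} come from the small diagonal (which, as you correctly observe, is too weak for that), but directly from the discrete Lindel\"ofness of the co-diagonal of $K$. With all compact subsets of $X$ metrizable and $X$ having a small diagonal by Corollary \ref{cor:smalld}, Gruenhage's Theorem 2.1 applies and yields $nw(X)\leq\o$. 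Without this intermediate step your argument has an unfillable gap.
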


\begin{proof}
Given a compact $K\C X$  observe  that $\Delta^c_K$
is discretely Lindel\"of being a closed subspace of
$\Delta^c_X$.  By  \cite[Proposition 3.3]{BT}, the space $K$ is
metrizable.  Since all compact subsets  of $X$ are metrizable and
$X$ has small diagonal  by  Corollary  \ref{cor:smalld},  we  can
apply  \cite[Theorem  2.1]{Gr}  to  conclude  that  $X$  has  a
countable network. \end{proof}

\begin{cor}
Suppose that $X$ is an $\o$-monolithic compact Hausdorff space of
countable  tightness.   If  $\Delta^c_X$ is almost
discretely Lindel\"of, then $X$  is metrizable. In particular, if
$X$ is a Corson compact space and $\Delta^c_X$  is
almost discretely Lindel\"of, then $X$ is metrizable. \end{cor}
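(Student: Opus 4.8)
The plan is to feed the hypotheses into the small-diagonal machinery already developed and then into a metrization theorem for compact spaces. First I would note that a compact Hausdorff space is regular, so $X$ falls exactly under the scope of the results preceding this corollary: it is $\o$-monolithic, regular, and $\Delta^c_X$ is almost discretely Lindel\"of. Hence the $\o$-monolithic consequence of Theorem \ref{thm:smalld} (the corollary asserting that an $\o$-monolithic regular space with almost discretely Lindel\"of co-diagonal has small diagonal) applies and gives that $X$ has a small diagonal. This reduction is routine and uses nothing beyond what is already proved; countable tightness plays no role here and is reserved for the metrization step.

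The heart of the matter is the implication that an $\o$-monolithic compact Hausdorff space of countable tightness with a small diagonal is metrizable. I would aim to prove this by upgrading the small diagonal to a $G_\delta$-diagonal and then invoking the classical theorem that a compact Hausdorff space with a $G_\delta$-diagonal is metrizable. The obstacle is precisely the gap between \emph{small diagonal} and \emph{$G_\delta$-diagonal}, which in general cannot be bridged (this is the content of the well-known open problem on small diagonals), so the two extra hypotheses must be used essentially. To exploit them I would run a reflection argument: fix a continuous increasing chain $\{M_\alpha : \alpha < \o_1\}$ of countable elementary submodels of a large $H(\theta)$ with $X \in M_0$, and set $Y_\alpha = \CL{X \cap M_\alpha}$. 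By $\o$-monolithicity each $Y_\alpha$ is a compact space with a countable network, hence metrizable; by countable tightness the increasing union $Y = \bigcup_{\alpha<\o_1} Y_\alpha$ is closed (any point of $\CL Y$ lies in the closure of a countable subset, which is captured by a single $Y_\gamma$), so $Y$ is a compact space covered by an increasing $\o_1$-tower of metrizable compacta. If $X$ were not metrizable, I would use this tower, together with the failure of the $G_\delta$-diagonal, to manufacture an uncountable $A \C \Delta^c_X$ every uncountable subset of which has closure meeting $\Delta_X$, directly contradicting the small diagonal.

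The main difficulty I expect is exactly the construction of this offending set $A$: at each level $\alpha$ one must select a pair of distinct points that are indistinguishable "modulo $M_\alpha$" yet separated in $X$, and arrange that no uncountable selection of such pairs can be pushed off the diagonal. This is where countable tightness and $\o$-monolithicity enter in a delicate way, and it is the step corresponding to the substantive metrization theorem for compacta with small diagonal. Accordingly, if a self-contained argument threatens to become too long, I would instead cite the appropriate metrization result for compact spaces with small diagonal from \cite{Gr} to conclude metrizability of $X$.

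Finally, the "in particular" clause is immediate: every Corson compact space is $\o$-monolithic and of countable tightness, so once $\Delta^c_X$ is almost discretely Lindel\"of the first part applies verbatim and yields metrizability. I would state this as a one-line specialization, recalling only these two standard properties of Corson compacta, with no new idea required.
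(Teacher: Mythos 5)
Your proposal is correct and, in its fallback form, coincides with the paper's own proof: regularity of the compact Hausdorff space $X$ together with $\o$-monolithicity and the almost discrete Lindel\"ofness of $\Delta^c_X$ give a small diagonal via Theorem \ref{thm:smalld}, after which one cites the known metrization theorem that every compact $\o$-monolithic space of countable tightness with a small diagonal is metrizable (the paper invokes \cite[Problem 296]{Tkb2} for this; your citation of \cite{Gr} points to the same circle of results), and the Corson case follows since Corson compacta are $\o$-monolithic and countably tight. Be aware, though, that your primary plan --- the elementary-submodel tower argument --- is not a proof as it stands: the construction of the ``offending set'' $A$ that you yourself flag as the main difficulty is precisely the entire content of that metrization theorem, so the sketch alone would leave a genuine gap, and the citation route you offer as a fallback is the one to take.
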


\begin{proof}
It follows from Theorem \ref{thm:smalld} that the space $X$ has
small   diagonal.   But  any  compact  $\o$-monolithic  space  of
countable  tightness  with  a  small  diagonal  is  metrizable by
\cite[Problem 296]{Tkb2}. \end{proof}

\begin{exa} \rm
In \cite{dVK}, Kunen and de  la  Vega  constructed  under  CH
a non-metrizable compact Hausdorff space $X$ such that $X^n$ is
hereditarily separable  for  all  $n\in  \amsN$.   In particular, then
$\Delta^c_X$  is  hereditarily   separable   and   hence   almost
discretely   Lindel\"of.  Thus,  almost  discrete  Lindel\"ofness  of
$\Delta^c_X$ does not imply the metrizability of a compact space $X$,
at least consistently.

It is easy to see that for the Kunen--de  la  Vega example $X$
we have $w(X)=\o_1$. Since
every compact space having  small  diagonal and weight
$\o_1$ is metrizable, the space $X$  cannot have small diagonal
(see  Problem 295 of the book \cite{Tkb2}).  Therefore, under CH, the
almost discrete  Lindel\"ofness  of  $\Delta^c_X$  does not imply that
$X$ has small diagonal for a compact Hausdorff space $X$. \end{exa}

We do not know whether there exists in ZFC a
non-metrizable compact space $X$ whose co-diagonal $\Delta^c_X$ is almost
discretely Lindel\"of.

\section{Open questions}

Since  it  is  still  not known whether every discretely
Lindel\"of space is Lindel\"of,  it  is  important to find out in
which stituations discrete Lindel\"ofness implies  Lindel\"ofness
or some weaker property. The list of respective open questions is
given below.

\begin{que}
Is every almost discretely Lindel\"of space weakly Lindel\"of?
\end{que}

\begin{que}
Is every discretely Lindel\"of space weakly Lindel\"of?
\end{que}

\begin{que}
Suppose  that $X$ is a space such that $\Delta^c_X$
is discretely $\sigma$-compact.  Is  it  true  that $X$ is
cosmic? \end{que}

\begin{que}
Suppose  that $X$ is a pseudocompact space such that $\Delta^c_X$ is discretely  Lindel\"of.  Is  it true in ZFC that
$X$ is compact and metrizable? \end{que}

\begin{que}
Suppose  that  $X$  is   an  almost  discretely  Lindel\"of  first
countable space. Then $|X|\leq  2^\cont$  but  is  it  true  that
$|X|\leq\cont$?
\end{que}

\begin{que}
Is there a ZFC example of an almost  discretely  Lindel\"of  space
that is not Lindel\"of?
\end{que}

\begin{que}
Suppose  that  $\Delta^c_X$  is  a  discretely
Lindel\"of Tychonoff space. Is it true in ZFC  that then $X$ is Lindel\"of?
and $iw(X)\leq\o$? \end{que}

\begin{que}
Suppose   that   $\Delta^c_X$  is  a  discretely
Lindel\"of Tychonoff space. Is it true in  ZFC  that  $X$  has  a
$G_\delta$-diagonal? \end{que}

\begin{que}
Suppose that $\Delta^c_X$ is discretely Lindel\"of.
Is it true in ZFC that $X$ is Lindel\"of? \end{que}

\begin{que}
Suppose that $\Delta^c_X$ is discretely Lindel\"of.
Is it true in ZFC that $|X|\leq\cont$? \end{que}

\begin{que}
Suppose that $X$ is a monotonically normal space and
$Y\C X$ is dense in $X$. Is it true that $l(X) \leq l(Y)$?
\end{que}

\begin{que}
Suppose  that  $X$  is discretely Lindel\"of and $K$ is a compact
space. Must $X\times K$ be discretely Lindel\"of? \end{que}

\medskip
\noindent{\bf \large  Acknowledgement.}  The  first author thanks
the   Department   of   Mathematics  of  Autonomous  Metropolitan
University of Mexico  for  the  support  during  the work on this
paper.

\bigskip
{\small
\noindent \rlap{\scshape Juh\'asz, Istvan}

\smallskip
\noindent \rlap{Alfr\'ed R\'enyi Institute of Mathematics,}

\noindent \rlap{Hungarian Academy of Sciences,}

\noindent  \rlap{Re\'altanoda u. 13--15,}

\noindent  \rlap{1053 Budapest, Hungary,}

\noindent  \rlap{\bf e-mail:  juhasz@renyi.hu}

}

\bigskip

{\small
\noindent \rlap{\scshape Tkachuk, Vladimir V.}

\smallskip
\noindent \rlap{Departamento de Matem\'aticas,}

\noindent \rlap{Universidad Aut\'onoma Metropolitana,}

\noindent  \rlap{Av.  San  Rafael  Atlixco,   186,  }

\noindent  \rlap{Col. Vicentina, Iztapalapa }

\noindent  \rlap{C.P. 09340,  Mexico D.F., Mexico}

\noindent  \rlap{\bf e-mail:  vova@xanum.uam.mx}
}

\bigskip
\bigskip
\bigskip

{\small
\noindent \rlap{\scshape Wilson, Richard G.}

\smallskip
\noindent \rlap{Departamento de Matem\'aticas,}

\noindent \rlap{Universidad Aut\'onoma Metropolitana,}

\noindent  \rlap{Av.  San  Rafael  Atlixco,   186,  }

\noindent  \rlap{Col. Vicentina, Iztapalapa }

\noindent  \rlap{C.P. 09340,  Mexico D.F., Mexico}

\noindent  \rlap{\bf e-mail:  rgw@xanum.uam.mx}
}

\end{document}